\theoremstyle{plain}
\newtheorem{theorem}{Theorem}
\newtheorem{lemma}{Lemma}
\newtheorem{corollary}{Corollary}
\newcommand{\RNum}[1]{\uppercase\expandafter{\romannumeral #1\relax}}
\def\FDP{\mathrm{FDP}}
\def\Var{\mathrm{Var}}
\def\Cov{\mathrm{Cov}}
\def\bD{\mathbf{D}}
\def\bX{\mathbf{X}}
\def\bT{\mathbf{T}}
\def\b1{\mathbf{1}}
\def\bSigma{\boldsymbol{\Sigma}}
\def\calH{\mathcal{H}}
\def\convergeP{\overset{p}{\to}}
\title{Asymptotic Uncertainty of False Discovery Proportion for Dependent $t$-Tests: Supplementary Materials}
\author{Meng Mei and Yuan Jiang\thanks{Yuan Jiang is the corresponding author. This research is supported in part by National Institutes of Health grant R01 GM126549.}}
\begin{document}

\maketitle

\section{Notation and Review of Technical Results in the Main Article}
Suppose $(X_{1},\ldots,X_{p})^{T} \sim N((\mu_{1},\ldots,\mu_{p})^{T},\bSigma)$ with unit variances, i.e., $\sigma_{jj}=1\text{ for } j=1,\ldots,p$. Define test statistics $\bT = \{T_1, \dots, T_p\} = \hat{\bD}^{-1}\sqrt{n}\Bar{\bX}$, which $\Bar{\bX}$ is the sample means of $\{\bX_i\}_{i = 1}^n$, and $\hat{\bD} = \text{diag}(\hat{\sigma}_1, \dots, \hat{\sigma}_p)$ are the square root diagonal elements of $\hat{\bSigma}$, the sample variance covariance matrix of  $\{\bX_i\}_{i = 1}^n$, and testing indicator result to be $t_j = \mathrm{1}(|T_j| > |q_{t/2}|)$, where $q_{t/2}$ is the $t/2$-quantile for $t_{n-1}$ distribution. Thus, the probability of simultaneously detecting signals will be
\begin{equation*}
\begin{split}
    & P(|T_{i_1}| > |q_{t/2}|, \dots, |T_{i_k}| > |q_{t/2}|)\\
    = & E(t_{i_1}t_{i_2}\dots t_{i_k}) \\
    = & E(E(t_{i_1}t_{i_2}\dots t_{i_k})|\{\hat{\sigma}_{i_1},\hat{\sigma}_{i_2},\dots,\hat{\sigma}_{i_k}\})\\
    = & E(P(|Z_{i_1}| > \hat{\sigma}_{i_1}|q_{t/2}|, \dots, |Z_{i_k}| > \hat{\sigma}_{i_k}|q_{t/2}|)|\{\hat{\sigma}_{i_1},\hat{\sigma}_{i_2},\dots,\hat{\sigma}_{i_k}\}) \\
    = & E(P^{\ast}(|Z_{i_1}| > \hat{\sigma}_{i_1}|q_{t/2}|, \dots, |Z_{i_k}| > \hat{\sigma}_{i_k}|q_{t/2}|)), \\
\end{split}
\end{equation*}
where  $(n-1)\hat{\sigma}^2_j \sim \chi_{n-1}^2$, and $P^{\ast}(\cdot) = P(\cdot|\{\text{All }\hat{\sigma}\})$.

We need the following notation in our development. Throughout, we use ``$\lesssim$" (``$\gtrsim$") to denote smaller (greater) than up to a universal constant. For any $t\in[0,1]$, we denote $\xi_j = P(|T_j|>|q_{t/2}|)$, and $\bar\xi = \frac{1}{p_1}\sum_{j\in \mathcal{H}_1} \xi_j$. Consider the function 
\begin{equation*}
H(\mu) =  E(\phi(q_{t/2}\hat{\sigma}_j)\hat{\sigma}_j \left\{ \phi(|q_{t/2}|\hat{\sigma} + \mu)(|q_{t/2}|\hat{\sigma} + \mu) + \phi(|q_{t/2}|\hat{\sigma} - \mu)(|q_{t/2}|\hat{\sigma} - \mu)\right\}); 
\end{equation*}
there exists a unique root, denoted by $\mu_t$, of this function for $\mu\in (0, \infty)$. Let $C_t^{\max} = \sup_{\mu \in (-\mu_t, \mu_t)} H(\mu)$. 

Let 
\begin{eqnarray*}f_{\{t_1,  \dots, t_k\}}^{(i_1,  \dots, i_k)}(a, t, \hat{\sigma}_i) = \dfrac{\partial^{\sum_{j = 1}^{k}i_j} f_{\{t_1, \ldots, t_k\}}( \rho_1, \ldots, \rho_k; a, t, \hat{\sigma}_i)}{\partial (\rho_1)^{i_1}   \ldots \partial(\rho_k)^{i_k}}\Big|_{\rho_j = 0, j=1,\ldots, k},
\end{eqnarray*}
where 
$$ f_{\{t_1,  \ldots, t_k\}}( \rho_1, \ldots, \rho_k; a, t, \hat{\sigma}_i) = \Phi \left(\dfrac{\hat{\sigma}_i q_{t/2} - \mu_a - \rho_1x_{t_1} -  \ldots - \rho_kx_{t_k}}{\sqrt{1 - \rho_1^2  - \ldots -\rho_k^2}}\right).$$
Let $\xi_i = E(t_i)$ for $i\in \mathcal{H}_1$; clearly we have $\xi_i = E(\Phi(q_{t/2}\hat{\sigma}_i + \mu_i) + \Phi(q_{t/2}\hat{\sigma}_i -\mu_i))$.

The following theorems are Theorem 1 and Theorem 2 in the main article; it establishes the asymptotic convergence and expansion of the $\FDP(t)$ in the framework of the weak dependence. 

\begin{theorem} \label{Thm-4}
Suppose $(X_{1},\ldots,X_{p})^{T} \sim N((\mu_{1},\ldots,\mu_{p})^{T},\bSigma)$ with unit variances, i.e., $\sigma_{jj}=1\text{ for } j=1,\ldots,p$. Define test statistics $\bT = \{T_1, \dots, T_p\} = \hat{\bD}^{-1}\sqrt{n}\Bar{\bX}$, which $\Bar{\bX}$ is the sample means of $\{\bX_i\}_{i = 1}^n$, and $\hat{\bD} = \text{diag}(\hat{\sigma}_1, \dots, \hat{\sigma}_p)$ are the square root diagonal elements of $\hat{\bSigma}$, the sample variance covariance matrix of  $\{\bX_i\}_{i = 1}^n$, and testing indicator result to be $t_j = \mathrm{1}(|T_j| > |q_{t/2}|)$. Assume that $(X_{1},\dots,X_{p})^T$ are weakly dependent as defined in (1) in the main article, that $\limsup_{p\to \infty} p_0t/(p_1\bar \xi) < 1$, and that as $p$ is sufficiently large. We have the following asymptotic convergence of $\FDP(t)$:
\begin{equation} \label{asymptotic.converge}
\lim_{p\to\infty} \left[\mathrm{FDP}(t) -\frac{\sum_{i \in \mathcal{H}_0}\xi_i}{\sum_{i=1}^{p}\xi_i}\right] = 0.
\end{equation}
\end{theorem}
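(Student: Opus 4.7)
The plan is to write $\FDP(t)$ as the ratio $V(t)/R(t)$, with $V(t):=\sum_{i\in\calH_0}t_i$ and $R(t):=\sum_{i=1}^{p}t_i$, show that each sum concentrates around its mean, and then combine via Slutsky's theorem. Since $E[V(t)]=\sum_{i\in\calH_0}\xi_i$ and $E[R(t)]=\sum_{i=1}^{p}\xi_i$, the deterministic ratio $E[V(t)]/E[R(t)]$ is precisely the target of \eqref{asymptotic.converge}, so it suffices to establish
\[
V(t)/E[V(t)]\convergeP 1\qquad\text{and}\qquad R(t)/E[R(t)]\convergeP 1.
\]
The assumption $\limsup_{p\to\infty} p_0 t/(p_1\bar\xi)<1$ forces $E[R(t)]\asymp p_1\bar\xi+p_0 t$, which both diverges to infinity and keeps the target ratio bounded in $[0,1)$, so that the Slutsky step converts the two in-probability convergences into \eqref{asymptotic.converge}.

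Each normalized-sum convergence is obtained via Chebyshev's inequality. For $A\in\{\calH_0,\{1,\ldots,p\}\}$ one has the decomposition
\[
\Var\!\Bigl(\sum_{i\in A}t_i\Bigr)=\sum_{i\in A}\xi_i(1-\xi_i)+\sum_{\substack{i,j\in A\\ i\neq j}}\Cov(t_i,t_j).
\]
The diagonal contribution is bounded by $|A|\max_i\xi_i$; because $\xi_i=t$ for $i\in\calH_0$ (the null marginal of $T_i$ is $t_{n-1}$), this piece is $o\bigl((\sum_{i\in A}\xi_i)^2\bigr)$ as soon as $p_0 t\to\infty$, so the substantive work is to control the off-diagonal covariance sum.

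For a fixed pair $i\neq j$ I would first condition on $(\hat\sigma_i,\hat\sigma_j)$. Given these, $(T_i,T_j)$ is a rescaled bivariate normal with correlation $\rho_{ij}$, and so $P^{\ast}(|T_i|>|q_{t/2}|,|T_j|>|q_{t/2}|)$ is expressible through the function $f_{\{t_1,t_2\}}(\rho_1,\rho_2;a,t,\hat\sigma_i)$ introduced in the excerpt. A Taylor expansion in $\rho_{ij}$ about $0$ has zeroth-order term equal to $P^{\ast}(|T_i|>|q_{t/2}|)\,P^{\ast}(|T_j|>|q_{t/2}|)$ (using that $\rho_{ij}=0$ decouples both the bivariate normal and the sample variances under the Gaussian model), so after averaging over $\hat\sigma$ the leading correction is the derivative $f_{\{t_1,t_2\}}^{(1,1)}$ at $\rho=0$ multiplied by $\rho_{ij}$. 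The function $H(\mu)$, its root $\mu_t$, and the constant $C_t^{\max}$ from the preceding paragraphs are engineered so that this derivative admits a uniform bound in $(\mu_i,\mu_j,\hat\sigma_i,\hat\sigma_j)$, yielding a pointwise estimate of the form $|\Cov(t_i,t_j)|\lesssim C_t^{\max}|\rho_{ij}|$. Summing over pairs and invoking the weak dependence assumption (1) from the main article, which bounds $\sum_{i\neq j}|\rho_{ij}|$ at the rate required, delivers the desired $o\bigl((\sum_{i\in A}\xi_i)^2\bigr)$ variance bound.

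The main obstacle is the double layer of dependence inherent to $t$-statistics: beyond the correlation $\rho_{ij}$ of the underlying Gaussians, the random denominators $\hat\sigma_i$ couple pairs through the $\chi^2_{n-1}$ variables. The auxiliary objects $f_{\{t_1,\ldots,t_k\}}^{(i_1,\ldots,i_k)}$, $H(\mu)$, and $C_t^{\max}$ are precisely the tools needed to absorb this coupling into a uniform constant, so that the pair-wise covariance factors cleanly as a multiple of $|\rho_{ij}|$; once that reduction is in place the Chebyshev step and the final Slutsky combination are routine.
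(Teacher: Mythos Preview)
Your proposal is correct and follows essentially the same route as the paper: bound $|\Cov(t_i,t_j)|$ by $O(|\sigma_{ij}|)$ via the Taylor expansion encoded in the $f^{(i_1,\ldots,i_k)}$ derivatives, sum using the weak-dependence assumption $\sum_{i,j}|\sigma_{ij}|=O(p^{2-\delta})$ to control the variance, and pass to the limit by a law of large numbers (the paper invokes Lyons' SLLN for almost-sure convergence rather than Chebyshev, then cites Fan et al.\ for the ratio step, but the skeleton is identical). One minor misattribution: $H(\mu)$, $\mu_t$, and $C_t^{\max}$ are not needed for this theorem---they govern the $\sigma_{ij}^2$ coefficient in the $\calH_0\times\calH_1$ covariance and are engineered for the sharper variance analysis in Theorem~\ref{Thm-5}, whereas here the crude first-order bound follows immediately from the uniform boundedness of $\phi$ and its polynomial prefactors.
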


\begin{theorem} \label{Thm-5}
Suppose $(X_{1},\ldots,X_{p})^{T} \sim N((\mu_{1},\ldots,\mu_{p})^{T},\bSigma)$ with unit variances, i.e., $\sigma_{jj}=1\text{ for } j=1,\ldots,p$. Define test statistics $\bT = \{T_1, \dots, T_p\} = \hat{\bD}^{-1}\sqrt{n}\Bar{\bX}$, which $\Bar{\bX}$ is the sample means of $\{\bX_i\}_{i = 1}^n$, and $\hat{\bD} = \text{diag}(\hat{\sigma}_1, \dots, \hat{\sigma}_p)$ are the square root diagonal elements of $\hat{\bSigma}$, the sample variance covariance matrix of  $\{\bX_i\}_{i = 1}^n$, and testing indicator result to be $t_j = \mathrm{1}(|T_j| > |q_{t/2}|)$. Assume that $(X_{1},\dots,X_{p})^T$ are weakly dependent as defined in (1) in the main article, that $\limsup_{p\to \infty} p_0t/(p_1\bar \xi) < 1$, and that as $p$ is sufficiently large, for a universal constant $C>0$,
\begin{align}
&\sum_{i\neq j; i,j\in \mathcal{H}_0} \sigma_{ij}^2 \geq \frac{C_t^{\max}}{E^2(\phi(\hat{\sigma}_i q_{t/2})|\hat{\sigma}_iq_{t/2}|)} \sum_{i\in \mathcal{H}_1, j\in \mathcal{H}_0, \mu_i\in[-\mu_t, \mu_t]} \sigma_{ij}^2, \\
&\sum_{i\neq j; i,j\in \mathcal{H}_0} \sigma_{ij}^2 + p \geq C \sum_{i\neq j; i, j\in \mathcal{H}_1} \sigma_{ij}^2, \\
&\sum_{i\neq j} \sigma_{ij}^4 = o\left(\sum_{i\neq j; i,j\in \mathcal{H}_0} \sigma_{ij}^2  + p_0\right).
\end{align}
We have the following asymptotic expansion of $\FDP(t)$:
\begin{equation} 
\FDP(t) = \frac{E(\bar V)}{E(\bar R)} + m(\bar{V},\bar{R}) +r(\bar V, \bar R), 
\end{equation}
where $\bar V = V(t)/p$, $\bar R = R(t)/p$,
\begin{equation*}
m(\bar{V},\bar{R}) =  \frac{\bar{V}}{E(\bar{R})} - \frac{E(\bar{V})}{\{E(\bar{R})\}^2} \bar{R},
\end{equation*}
and the remainder term $r(\bar V, \bar R)$ satisfies that $E\{r^2(\bar V, \bar R)\} = o[\Var\{m(\bar{V},\bar{R})\}]$.
\end{theorem}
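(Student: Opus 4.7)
The plan is to establish the expansion via a Taylor-type linearization of the ratio $\bar V/\bar R$ around its deterministic target $(E\bar V, E\bar R)$, controlling the quadratic remainder in $L^2$. Writing $U = \bar V - E\bar V$ and $W = \bar R - E\bar R$, an elementary algebraic identity gives
\begin{equation*}
\frac{\bar V}{\bar R} - \frac{E\bar V}{E\bar R} = \frac{E\bar R\,U - E\bar V\,W}{(E\bar R)^2} - \frac{(E\bar R\,U - E\bar V\,W)\,W}{(E\bar R)^2 \bar R} = m(\bar V, \bar R) + r(\bar V, \bar R),
\end{equation*}
so in particular $r = -m\cdot W/\bar R$, a compact form I will exploit throughout. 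The hypothesis $\limsup p_0 t/(p_1 \bar\xi) < 1$ together with $E\bar R = (p_0 t + p_1 \bar\xi)/p$ keeps $E\bar V/E\bar R$ strictly below $1$ and $E\bar R$ bounded away from $0$, so that the linearization is well posed.

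Next I would establish a matching lower bound on $\Var\{m(\bar V, \bar R)\}$. Rewriting $m = (p\,E\bar R)^{-1}\sum_i \alpha_i (t_i - E t_i)$ with $\alpha_i = 1 - E\bar V/E\bar R$ for $i\in\mathcal{H}_0$ and $\alpha_i = -E\bar V/E\bar R$ for $i\in\mathcal{H}_1$, I decompose $\Var(m)$ into null--null, null--alt, and alt--alt pair contributions. Expanding each $\Cov(t_i, t_j)$ as a power series in $\sigma_{ij}$ using the $f^{(k_1,k_2)}$ defined in the setup --- whose leading null--null term is proportional to $\sigma_{ij}^2\, E[\phi(\hat\sigma_i q_{t/2})\phi(\hat\sigma_j q_{t/2})\hat\sigma_i\hat\sigma_j]$ --- and invoking assumptions (1) and (2) to absorb the cross and alt--alt contributions into the null--null sum plus diagonal, I would obtain
\begin{equation*}
\Var\{m(\bar V, \bar R)\} \gtrsim \frac{1}{p^2}\Bigl(p_0 + \sum_{i\neq j;\ i,j\in\mathcal{H}_0}\sigma_{ij}^2\Bigr).
\end{equation*}

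For the upper bound on $E\{r^2(\bar V, \bar R)\}$, I split on $A = \{|W| \leq E\bar R/2\}$: on $A$, $\bar R \geq E\bar R/2$ yields $E(r^2\mathbf{1}_A) \leq 4(E\bar R)^{-2}\,E(m^2 W^2)$; on $A^c$, boundedness of $|r|$ (from $\FDP\in[0,1]$ and $E\bar R$ bounded below) combined with Markov's inequality applied to $W^4$ gives $E(r^2\mathbf{1}_{A^c}) \lesssim E(W^4)/(E\bar R)^4$. The main obstacle, and where assumption (3) enters crucially, is bounding the mixed fourth moment $E(m^2 W^2)$ and the fourth moment $E(W^4)$, each a quadruple sum $p^{-4}\sum_{i,j,k,l}(\cdot)\,E\{(t_i - Et_i)\cdots(t_l - Et_l)\}$. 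Expanding the joint fourth moments of bounded functions of jointly Gaussian variables via an Isserlis-type pairing expansion, the diagonal pieces are controlled by $\sum \sigma_{ij}^4$ and hence, by assumption (3), are of strictly smaller order than the lower bound on $\Var(m)$; the product-pairing pieces are of order $\Var(m)\cdot\Var(\bar R)\cdot(E\bar R)^2$ and $\Var(\bar R)^2$, which when divided by $(E\bar R)^2$ are $o(\Var m)$ because $\Var(\bar R)\to 0$ under the weak dependence hypothesis. Combining the three steps yields $E\{r^2(\bar V, \bar R)\} = o[\Var\{m(\bar V, \bar R)\}]$, completing the expansion.
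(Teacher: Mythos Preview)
Your approach is essentially the paper's: linearize the ratio, lower-bound $\Var\{m(\bar V,\bar R)\}$ by $p^{-2}\bigl(p_0+\sum_{i\neq j;\,i,j\in\mathcal H_0}\sigma_{ij}^2\bigr)$, and control the remainder through fourth moments of $\bar V-E\bar V$ and $\bar R-E\bar R$. The organizational differences are cosmetic: the paper handles the singularity at $\bar R=0$ by truncating to $\bar R\vee c$ and carrying a correction term $\mathcal J=\bar V/\bar R-\bar V/(\bar R\vee c)$, whereas you use the exact identity $r=-mW/\bar R$ and split on $\{|W|\le E\bar R/2\}$; both devices reduce the problem to the same fourth-moment estimates.

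The one place your sketch is thin is the fourth-cumulant control. Writing the joint fourth moment as ``pairings plus diagonal pieces controlled by $\sum\sigma_{ij}^4$'' understates what is needed: the $t_i$ are indicators of $t$-statistic events, not Gaussians, so no Isserlis identity applies directly. The paper expands each joint probability as a Taylor series in the $\sigma_{ij}$ (its Lemmas~\ref{t-lem-3}--\ref{t-lem-6}) and shows that the genuine fourth-cumulant contribution contains, beyond the $O(\sum\sigma_{ij}^4)$ term you cite, pieces of order $\sum_{i\in\mathcal H_1}\bigl(\sum_{j\neq i,\,j\in\mathcal H_1}|\sigma_{ij}|\bigr)^3$ and $\sum_{i\neq j}|\sigma_{ij}|\sum_{k}|\sigma_{ik}|\sum_{l}|\sigma_{jl}|$, whose control requires the weak-dependence assumption $\sum|\sigma_{ij}|=O(p^{2-\delta})$ in addition to assumption~(3). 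Your plan goes through once this is filled in, but assumption~(3) alone does not cover all the ``diagonal'' terms.
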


A direct outcome of this theorem is the following Corollary, which establishes the explicit formula for the asymptotic variance of $\FDP(t)$. It is presented as Corollary 1 in the main article. 

\begin{corollary} \label{Corrollary-asym-var-unknown}
With all the conditions in Theorem \ref{Thm-5} effective, we have
\begin{equation} 
\lim_{p\to\infty}\dfrac{\Var\left\{\FDP(t)\right\}}{V_1(t) + V_2(t)} = 1, 
\end{equation}
where 
\begin{align}
V_1(t) ={} & \dfrac{p_1^2 \bar{\xi}^2}{(p_0 t + p_1\bar{\xi})^4} \times p_0t(1-t) + \dfrac{p_0^2 t^2}{(p_0 t + p_1\bar{\xi})^4} \times p_1\bar{\xi}(1 - \bar{\xi}), \\
V_2(t) ={} & \dfrac{2p_1^2 \bar{\xi}^2}{(p_0 t + p_1\bar{\xi})^4} \sum_{\substack{i<j \\ i,j\in\calH_0}} \Cov(t_i,t_j)  - \dfrac{2p_0p_1t\bar{\xi}}{(p_0 t + p_1\bar{\xi})^4} \sum_{\substack{i \in \calH_0 \\ j \in \calH_1}} \Cov(t_i, t_j) \notag \\
&+ \dfrac{2p_0^2 t^2}{(p_0 t + p_1\bar{\xi})^4} \sum_{\substack{i<j \\ i,j\in\calH_1}} \Cov(t_i,t_j). 
\end{align}
\end{corollary}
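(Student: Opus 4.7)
The plan is to derive the corollary as a direct consequence of the expansion in Theorem \ref{Thm-5}. Since the leading term $E(\bar V)/E(\bar R)$ is deterministic, we have
\[
\Var\{\FDP(t)\} = \Var\{m(\bar V, \bar R)\} + 2\Cov\{m(\bar V, \bar R), r(\bar V, \bar R)\} + \Var\{r(\bar V, \bar R)\}.
\]
Using $\Var(r) \leq E(r^2) = o[\Var(m)]$ and Cauchy--Schwarz to bound the cross term by $\sqrt{\Var(m)\cdot\Var(r)} = o[\Var(m)]$, I would conclude that $\Var\{\FDP(t)\}/\Var\{m(\bar V, \bar R)\} \to 1$. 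The entire corollary then reduces to computing $\Var\{m(\bar V, \bar R)\}$ exactly and identifying it with $V_1(t) + V_2(t)$.

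For the explicit calculation, I would first rewrite $m$ in a symmetric form. Setting $\bar W = p^{-1}\sum_{i\in\calH_1} t_i$ so that $\bar R = \bar V + \bar W$ and $E(\bar R) = E(\bar V) + E(\bar W)$, direct substitution yields
\[
m(\bar V, \bar R) = \frac{E(\bar W)\, \bar V - E(\bar V)\, \bar W}{\{E(\bar R)\}^2}.
\]
Consequently,
\[
\Var\{m\} = \frac{\{E(\bar W)\}^2 \Var(\bar V) + \{E(\bar V)\}^2 \Var(\bar W) - 2 E(\bar V) E(\bar W) \Cov(\bar V, \bar W)}{\{E(\bar R)\}^4}.
\]
Using $E(t_i) = t$ for $i \in \calH_0$ (since $T_i \sim t_{n-1}$ under the null) and $E(t_i) = \xi_i$ for $i\in \calH_1$, I have $E(\bar V) = p_0 t/p$ and $E(\bar W) = p_1\bar\xi/p$, so the prefactor $\{E(\bar R)\}^{-4} = p^4/(p_0 t + p_1\bar\xi)^4$ combined with $\{E(\bar W)\}^2, \{E(\bar V)\}^2, E(\bar V)E(\bar W)$ produces precisely the scalar coefficients appearing in $V_1$ and $V_2$.

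The last step is to decompose each variance/covariance into diagonal and off-diagonal parts. For instance, $\Var(\bar V) = p^{-2}[p_0 t(1-t) + 2\sum_{i<j,\, i,j\in\calH_0}\Cov(t_i,t_j)]$, with the diagonal piece $p_0 t(1-t)$ feeding the first term of $V_1(t)$ and the off-diagonal piece feeding the first term of $V_2(t)$. Identical treatment of $\Var(\bar W)$ and $\Cov(\bar V, \bar W)$ produces the remaining four terms. Collecting contributions term by term matches $V_1(t) + V_2(t)$ exactly.

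The main obstacle is twofold. First, some care is needed to justify that the negligibility of $r$ in the $L^2$ sense transfers to the variance (the Cauchy--Schwarz step above handles this cleanly). Second, and more delicately, the matching of the diagonal variance $\sum_{i\in\calH_1}\xi_i(1-\xi_i)$ to $p_1\bar\xi(1-\bar\xi)$ requires verifying that the discrepancy $\sum_{i\in\calH_1}(\xi_i - \bar\xi)^2$ is of smaller order than $V_2(t)$; under the weak dependence assumption and the scaling conditions of Theorem \ref{Thm-5}, this follows from a comparison with $p$ and with $\sum_{i\neq j,\, i,j\in\calH_0}\sigma_{ij}^2$, and it is the one place where one must invoke the structural hypotheses of the theorem beyond the expansion itself. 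Once those bounds are in hand, the stated limit $\Var\{\FDP(t)\}/(V_1(t)+V_2(t)) \to 1$ is immediate.
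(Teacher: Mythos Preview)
Your approach is essentially the same as the paper's: take the variance of the expansion in Theorem~\ref{Thm-5}, control the cross and remainder terms via Cauchy--Schwarz together with $E(r^2)=o[\Var(m)]$, conclude $\Var\{\FDP(t)\}/\Var\{m(\bar V,\bar R)\}\to 1$, and then identify $\Var\{m\}$ with $V_1(t)+V_2(t)$. The paper dispatches this last identification in a single sentence (``with straightforward evaluation''), whereas you spell out the $\bar V,\bar W$ rewriting and the diagonal/off-diagonal split; that is the same computation in more detail.

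The one substantive point you raise that the paper does not address is the replacement of $\sum_{i\in\calH_1}\xi_i(1-\xi_i)$ by $p_1\bar\xi(1-\bar\xi)$, whose difference is $\sum_{i\in\calH_1}(\xi_i-\bar\xi)^2$. The paper simply asserts the identification and does not isolate this term; you are right that some argument is needed here, and the lower bound on $\Var\{m\}$ obtained in Lemma~\ref{t-lem-7} (specifically \eqref{t-eq-lem7-1}) is the natural tool. So your proposal is not only aligned with the paper's argument but is slightly more careful on this step.
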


\section{Lemmas} \label{sec-lemmas-unknown}

We first establish the following lemmas; they play fundamental roles in our proofs of the theoretical results given above. In particular, Lemmas \ref{t-lem-3}--\ref{t-lem-7} are to support our proof for Theorem \ref{Thm-5}. 


\begin{lemma}\label{t-lem-3}
The partial derivatives $f_{t_1, t_2, t_3}^{i_1, i_2, i_3}(a,t, \hat{\sigma}_i)$ can be written to be
$$ f_{t_1, t_2, t_3}^{i_1, i_2, i_3}(a,t, \hat{\sigma}_i) = C^{a}_{t/2, \hat{\sigma}_i}(i_1 + i_2 + i_3)g_{i_1}(x_{t_1})g_{i_2}(x_{t_2})g_{i_3}(x_{t_3}),$$
where $(n-1)\hat{\sigma}^2_i \sim \chi_{n-1}^2$ for $\forall i \in \{1, \dots, p\}$, and
\begin{eqnarray*}
&g_0(x) = 1, \qquad g_1(x) = x, \qquad g_2(x) = x^2 - 1, \qquad g_3(x) = x^3-3x,&\\ & g_4(x) = x^4 - 6x^2 + 3, \qquad  g_5(x) =  x^5 - 10x^3 + 15x& \\
&g_6(x) =  x^6 - 15x^4 + 45x^2 - 15, \qquad  g_7(x) =  x^7 - 21x^5 + 105x^3 - 105x&\\
&g_8(x) =  x^8 - 28x^6 + 210x^4 -420x^2 + 105,&
\end{eqnarray*}
$C^{a}_{t/2, \hat{\sigma}_i}(0) = \Phi(\hat{\sigma}_i q_{t/2} - \sqrt{n}\mu_a)$, and $C^{a}_{t/2, \hat{\sigma}_i}(i) = -\phi(\hat{\sigma}_i q_{t/2}- \sqrt{n}\mu_a) g_{i-1}(\hat{\sigma}_iq_{t/2} - \sqrt{n}\mu_a)$ for $i\geq 1$. Notice that $C^{a}_{t/2, \hat{\sigma}_i}(i)$ are bounded functions for $\forall i\in\{0, \dots, 9\}$ and $\forall \hat{\sigma}_i \in (0, \infty)$. Furthermore, for $X\sim N(0,1)$,
\begin{eqnarray*}
E\{g_i(X)\} = 0, && \mbox{ for } i=1,\ldots, 8,\\
\mbox{and } \quad E\{ g_{i_1}(X) g_{i_2}(X)\} = 0, && \mbox{ for  any }i_1, i_2\in \{0, \ldots, 8 \} , i_1 \neq i_2;
\end{eqnarray*}
and for $\mu_a = 0$,
\begin{eqnarray}
C^{a}_{1-t/2, \hat{\sigma}_i}(i) = \begin{cases}
C^{a}_{t/2, \hat{\sigma}_i}(i), &\text{ if i is odd}\\
-C^{a}_{t/2, \hat{\sigma}_i}(i), &\text{ if i is even}.\\ 
\end{cases} \label{t-eq-lem3-0}
\end{eqnarray}
\end{lemma}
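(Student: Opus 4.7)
The plan is to recognize the function $f_{\{t_1,t_2,t_3\}}$ as the conditional probability $P(Y\le a\mid X_1=x_{t_1}, X_2=x_{t_2}, X_3=x_{t_3})$ where $(Y,X_1,X_2,X_3)$ is jointly Gaussian with all marginals $N(0,1)$, $\mathrm{Cov}(Y,X_j)=\rho_j$, and $\mathrm{Cov}(X_i,X_j)=0$ for $i\ne j$, with $a = \hat\sigma_i q_{t/2}-\sqrt{n}\mu_a$. The key tool is the Gaussian heat-type identity
\[
\frac{\partial}{\partial \rho_j}\, p(y,x_1,x_2,x_3;\rho)=\frac{\partial^2}{\partial y\,\partial x_j}\, p(y,x_1,x_2,x_3;\rho),
\]
which I would verify either by direct computation on the Gaussian density or, more cleanly, by noting that the joint characteristic function is $\exp\{-\tfrac12(t_0^2+\sum t_j^2+2\sum \rho_j t_0 t_j)\}$, so both sides have Fourier transform $-t_0 t_j\,\hat p$. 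Because the mixed $\rho$-derivatives commute and so do the mixed space derivatives, iterating yields
\[
\frac{\partial^{i_1+i_2+i_3}}{\partial\rho_1^{i_1}\partial\rho_2^{i_2}\partial\rho_3^{i_3}}\,p \;=\; \frac{\partial^{2(i_1+i_2+i_3)}}{\partial y^{i_1+i_2+i_3}\partial x_1^{i_1}\partial x_2^{i_2}\partial x_3^{i_3}}\,p .
\]

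Next, writing $f=\bigl(\int_{-\infty}^a p\,dy\bigr)/\{\phi(x_{t_1})\phi(x_{t_2})\phi(x_{t_3})\}$ and pushing the $\rho$-derivatives inside the integral, one $y$-integration reduces the order in $y$ by one (the boundary at $-\infty$ vanishes by Gaussian decay). Evaluated at $\rho=0$ the density factorizes as $\phi(y)\prod_j\phi(x_{t_j})$, and Rodrigues' formula $(-1)^k\phi^{(k)}(x)=g_k(x)\phi(x)$—which is precisely the defining recursion producing the listed polynomials $g_0,\dots,g_8$—lets me convert each derivative into a Hermite polynomial times $\phi$. Canceling the marginal factors $\phi(x_{t_j})$ against the $x_j$-derivatives, for $k:=i_1+i_2+i_3\ge 1$ I obtain
\[
f_{t_1,t_2,t_3}^{i_1,i_2,i_3}(a,t,\hat\sigma_i)=-\phi(a)\,g_{k-1}(a)\,g_{i_1}(x_{t_1})g_{i_2}(x_{t_2})g_{i_3}(x_{t_3}),
\]
and for $k=0$ simply $\Phi(a)$. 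This matches the claimed product form with $C^a_{t/2,\hat\sigma_i}(k)$ as stated.

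The remaining claims follow from standard facts about Hermite polynomials. Boundedness of $C^a_{t/2,\hat\sigma_i}(k)$ is immediate since $\phi$ decays super-exponentially and dominates any polynomial $g_{k-1}$; the expectation identities $E\{g_i(X)\}=0$ for $i\ge 1$ and $E\{g_{i_1}(X)g_{i_2}(X)\}=0$ for $i_1\ne i_2$ are the orthogonality of probabilists' Hermite polynomials against the standard Gaussian measure, which can be verified by integration by parts using the Rodrigues representation. For (\ref{t-eq-lem3-0}) with $\mu_a=0$, I would use that $q_{1-t/2}=-q_{t/2}$ by the symmetry of the $t_{n-1}$ law, that $\phi$ is even, and that $g_k(-x)=(-1)^k g_k(x)$, giving $C^{a}_{1-t/2,\hat\sigma_i}(i)=(-1)^{i-1}C^{a}_{t/2,\hat\sigma_i}(i)$, which is $+$ when $i$ is odd and $-$ when $i$ is even as claimed. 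The main subtlety I anticipate is verifying the Gaussian density identity $\partial_{\rho_j}p=\partial_y\partial_{x_j}p$ cleanly and justifying the interchange of differentiation and integration over $(-\infty,a]$; both are routine but require checking integrability of the derivatives of $p$ uniformly in a neighborhood of $\rho=0$.
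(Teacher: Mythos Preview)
Your approach is correct and, in fact, more informative than what the paper offers: the paper's proof consists entirely of the sentence ``The proof is based on straightforward but tedious evaluations on the partial derivatives \ldots; the details are omitted.'' Your route via the Plackett--Slepian identity $\partial_{\rho_j} p = \partial_y\partial_{x_j} p$ for the joint Gaussian density, followed by Rodrigues' formula $(-1)^m\phi^{(m)}(x)=g_m(x)\phi(x)$ for the probabilists' Hermite polynomials, explains \emph{why} the answer factorizes as a product of Hermite polynomials: the $\rho$-derivatives become iterated spatial derivatives, the $y$-integral consumes one $y$-derivative, and at $\rho=0$ the density splits as $\phi(y)\prod_j\phi(x_{t_j})$, so each factor contributes a single $g_m$. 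The sign bookkeeping $(-1)^{k-1}\cdot(-1)^{i_1+i_2+i_3}=-1$ (with $k=i_1+i_2+i_3$) recovers the stated $C^a_{t/2,\hat\sigma_i}(k)$ exactly. The orthogonality and parity claims are then standard Hermite facts, and your derivation of \eqref{t-eq-lem3-0} via $q_{1-t/2}=-q_{t/2}$, the evenness of $\phi$, and $g_{k}(-x)=(-1)^k g_k(x)$ is clean. By contrast, the paper's implicit approach is to expand $\Phi\bigl((a-\sum\rho_j x_j)/\sqrt{1-\sum\rho_j^2}\bigr)$ directly in powers of the $\rho_j$'s and identify the coefficients term by term; this works but hides the Hermite structure, whereas your argument scales to any order without new computation.
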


\begin{proof}
The proof is based on straightforward but tedious evaluations on the partial derivatives of $f_{\{t_1, \ldots, t_k\}}(\rho_1, \ldots, \rho_k; a,t)$ with respect to $\rho_1,\ldots, \rho_k$; the details are omitted. 
\end{proof}

\begin{lemma}\label{t-lem-4}
Recall the definition:  $t_i = \mathrm{1}(|T_i| > |q_{t/2}|) = \mathrm{1}(P_i < t)$; we have
\begin{itemize}
    \item[(P1). ] when $i\neq j$, and $i,j\in \mathcal{H}_1$,  \begin{eqnarray*}
    &&E(t_it_j) - E(t_i) E(t_j) \\&=& E(\left\{C^i_{t/2, \hat{\sigma}_i}(1)-C^i_{1-t/2, \hat{\sigma}_i}(1)\right\}\left\{C^j_{t/2, \hat{\sigma}_j}(1)-C^j_{1-t/2, \hat{\sigma}_j}(1)\right\})\sigma_{ij}\\
     &&+ \frac{1}{2}E(\left\{C^i_{t/2, \hat{\sigma}_i}(2)-C^i_{1-t/2, \hat{\sigma}_i}(2)\right\}\left\{C^j_{t/2, \hat{\sigma}_j}(2)-C^j_{1-t/2, \hat{\sigma}_j}(2)\right\})\sigma_{ij}^2\\
     &&+\frac{E\{g_3^2(X_1)\}}{(3!)^2} E(\left\{C_{t/2, \hat{\sigma}_i}^i(3) -  C_{1-t/2, \hat{\sigma}_i}^i(3)\right\}\left\{C_{t/2, \hat{\sigma}_j}^j(3) -  C_{1-t/2, \hat{\sigma}_j}^j(3)\right\})\sigma_{ij}^3+O(\sigma_{ij}^4);
    \end{eqnarray*}
    
    \item[(P2).]  when $i\in \mathcal{H}_1, j\in \mathcal{H}_0$, for any $t$, there exists a unique root $\mu_t \in (0, \infty)$ of $H(\mu_j)$, such that when $\mu_j \leq \mu_t$,
    \begin{eqnarray*}
    && O(\sigma_{ij}^4)< E(t_it_j) - E(t_i) E(t_j) = -\sigma_{ij}^2 E(\phi(q_{t/2}\hat{\sigma}_j)\hat{\sigma}_jq_{t/2} \left\{ C_{t/2, \hat{\sigma}_j}^j(2)) - E(C_{1-t/2, \hat{\sigma}_i}^j(2) \right\}) + O(\sigma_{ij}^4) \\
    &\leq &  C_t^{\max} \sigma_{ij}^2 + O(\sigma_{ij}^4),
    \end{eqnarray*}
   where 
   \begin{eqnarray*}
    C_t^{\max} = \sup_{\mu}  E(\phi(q_{t/2}\hat{\sigma}_j)\hat{\sigma}_jq_{t/2} \left\{ C_{t/2, \hat{\sigma}_j}^j(2)) - E(C_{1-t/2, \hat{\sigma}_i}^j(2) \right\});
    \end{eqnarray*} 
    and when  $|\mu_j|\geq \mu_t$,
    \begin{eqnarray*}
    E(t_it_j) - E(t_i) E(t_j) < O(\sigma_{ij}^4).
    \end{eqnarray*}
    
    \item[(P3).] when $i\neq j$, and $i,j\in \mathcal{H}_0$, 
    \begin{eqnarray*}
    E(t_it_j) - E(t_i) E(t_j) = 2 E(\phi^2(q_{t/2}\hat{\sigma}_i)(q_{t/2}\hat{\sigma}_i)^2) \sigma_{ij}^2 + O(\sigma_{ij}^4).
    \end{eqnarray*}
\end{itemize}
\end{lemma}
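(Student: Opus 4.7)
The plan is to reduce all three parts to a single Taylor expansion of $E(t_it_j)$ in the correlation $\sigma_{ij}$, reading off the coefficients from Lemma~\ref{t-lem-3} and letting the symmetry identity~\eqref{t-eq-lem3-0} drive the case-by-case cancellations.

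First I would condition on $(\hat\sigma_i,\hat\sigma_j)$. Because sample means and sample variances are independent under the Gaussian model, the pair $(\sqrt n\bar X_i,\sqrt n\bar X_j)$ is bivariate normal with means $\sqrt n\mu_i,\sqrt n\mu_j$, unit variances and correlation $\sigma_{ij}$, independent of the $\hat\sigma$'s. Splitting each $\{|T_a|>|q_{t/2}|\}$ into its upper and lower tails turns $P^{\ast}(|T_i|>|q_{t/2}|,|T_j|>|q_{t/2}|)$ into a sum of four bivariate normal orthant probabilities depending on $\sigma_{ij}$. Taylor-expanding each in $\sigma_{ij}$ around $0$ via Lemma~\ref{t-lem-3} (which provides the partial derivatives in the separable form $C^{\cdot}(k)g_k(x)$), integrating against the Gaussian weight of the conditioned variable, and using the Hermite orthogonality $E\{g_{k_1}(X)g_{k_2}(X)\}=0$ for $k_1\neq k_2$ reduces each orthant to a clean series in $\sigma_{ij}$. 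After summing the four orthants, taking expectation in $(\hat\sigma_i,\hat\sigma_j)$, and subtracting $E(t_i)E(t_j)=\xi_i\xi_j$ (which cancels the $k=0$ term), the covariance becomes
\begin{equation*}
E(t_it_j)-E(t_i)E(t_j)=\sum_{k\geq 1}\frac{E\{g_k^2(X)\}}{(k!)^2}\,E\!\left[\bigl\{C^i_{t/2,\hat\sigma_i}(k)-C^i_{1-t/2,\hat\sigma_i}(k)\bigr\}\bigl\{C^j_{t/2,\hat\sigma_j}(k)-C^j_{1-t/2,\hat\sigma_j}(k)\bigr\}\right]\sigma_{ij}^{k}.
\end{equation*}

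The three parts then differ only in which coefficients survive~\eqref{t-eq-lem3-0}. For (P1), no cancellation occurs and the $k=1,2,3$ coefficients reproduce the three displayed terms (using $E\{g_k^2(X)\}=k!$, which gives the numerical factors $1$, $1/2$, and the written $E\{g_3^2(X_1)\}/(3!)^2$). For (P2), $\mu_j=0$ forces, via~\eqref{t-eq-lem3-0}, the $j$-side braces to vanish for every odd $k$, so only the $\sigma_{ij}^2$ term survives; substituting $C^j_{t/2,\hat\sigma_j}(2)=-\phi(q_{t/2}\hat\sigma_j)(q_{t/2}\hat\sigma_j)$ identifies the resulting coefficient with $H(\sqrt n\mu_i)$ up to a constant, which is bounded above by $C_t^{\max}$ when $|\mu_i|\le\mu_t$ and is non-positive otherwise. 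For (P3), both indices are null, so odd $k$ vanish on both sides, and evaluating the $k=2$ coefficient directly yields $2E[\phi^2(q_{t/2}\hat\sigma_i)(q_{t/2}\hat\sigma_i)^2]\sigma_{ij}^2$.

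The two remaining steps are the uniform $O(\sigma_{ij}^4)$ remainder and the unique-root claim in (P2). For the remainder, the universal boundedness of $C^a_{t/2,\hat\sigma_i}(k)$ for $k\le 9$ asserted in Lemma~\ref{t-lem-3}, together with finite moments of $g_k(X)$ and $|\sigma_{ij}|\leq 1$, bounds the tail of the series by a universal constant times $\sigma_{ij}^4$. The more delicate step, and in my view the main obstacle, is proving the uniqueness of the positive root of $H(\mu)$. Since $H(0)>0$, $H(\mu)\to 0$ as $\mu\to\infty$, and $H$ is smooth, it suffices to establish a single sign change on $(0,\infty)$; differentiating under the expectation and using $\phi'(x)=-x\phi(x)$ reduces this, for each fixed $\hat\sigma$, to showing that $\phi(c-\mu)[(c-\mu)^2-1]-\phi(c+\mu)[(c+\mu)^2-1]$ with $c=|q_{t/2}|\hat\sigma$ has a unique zero on $(0,\infty)$, which can be verified by a log-concavity/ratio-monotonicity argument comparing its two summands. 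The $\hat\sigma$-expectation preserves the single sign change, producing $\mu_t$, and the remaining bounds in (P2) follow directly from the definition of $C_t^{\max}=\sup_{\mu\in(-\mu_t,\mu_t)}H(\mu)$.
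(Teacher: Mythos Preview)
Your proposal is correct and follows essentially the same route as the paper: decompose $t_it_j$ into the four orthant events indexed by $(a_1,a_2)\in\{1,-1\}^2$, Taylor-expand the resulting bivariate normal probabilities in $\sigma_{ij}$ using the separable derivatives of Lemma~\ref{t-lem-3}, invoke Hermite orthogonality to collapse to the single-index series, and then let the parity identity~\eqref{t-eq-lem3-0} handle the case split among (P1)--(P3). Your treatment of the unique-root claim for $H(\mu)$ in (P2) is in fact more detailed than the paper's, which simply asserts existence and uniqueness of $\mu_t$ without the sign-change argument you outline.
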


\begin{proof}
Note that we can write 
\begin{eqnarray*}
E(t_it_j) - E(t_i) E(t_j) = \sum_{a_1, a_2 \in \{1, -1\}}a_1a_2 E(h_{i,j}(a_1, a_2)), \label{t-eq-lem4-1}
\end{eqnarray*}
where
\begin{eqnarray*}
    h_{i,j}(a_1, a_2) = P^{\ast}(Z_{i} < \hat{\sigma}_{i} a_1 q_{t/2}, Z_j < \hat{\sigma}_{j} a_2 q_{t/2})\label{t-eq-lem4-2}
\end{eqnarray*}

We derive $h_{i,j}(a_1, a_2)$ for $a_1 = a_2 = 1$; the derivations for other values of $a_1$ and $a_2$ are similar. Let $X_1, X_2, X_3$ be i.i.d. standard normal random variables, we can write 
\begin{eqnarray*}Z_i = \mu_i + \sqrt{|\sigma_{ij}|} X_1 + \sqrt{1-|\sigma_{ij}|}X_2 &&\\
Z_j = \mu_j + \sqrt{|\sigma_{ij}|} X_1 + \sqrt{1-|\sigma_{ij}|}X_3 &&\mbox{if } \sigma_{ij}\geq 0,
\end{eqnarray*}
and 
\begin{eqnarray*}
Z_j = \mu_j - \sqrt{|\sigma_{ij}|} X_1 + \sqrt{1-|\sigma_{ij}|}X_3 \quad \mbox{if } \sigma_{ij}<0.
\end{eqnarray*} 
Hereafter, we assume that $\sigma_{ij}\geq0$; the case for $\sigma_{ij}<0$ can be derived the same. 
The first term of $h_{i,j}(a_1, a_2)$ is given by 
\begin{equation}
    \begin{split}
        &  P^{\ast}(Z_{i} < \hat{\sigma}_{i} a_1 q_{t/2}, Z_j < \hat{\sigma}_{j} a_2 q_{t/2}) \\
        = & \int \Phi \left(\dfrac{\hat{\sigma}_{i} q_{t/2} - \mu_i - \sqrt{\sigma_{ij}}x_{1}}{\sqrt{1 - \sigma_{ij}}}\right)\Phi \left(\dfrac{\hat{\sigma}_{j} q_{t/2} - \mu_j - \sqrt{\sigma_{ij}}x_{1}}{\sqrt{1 - \sigma_{ij}}}\right)\phi(x_1)dx_1.\\
    \end{split} \label{t-eq-lem4-3}
\end{equation}
Note that by definition $\Phi \left(\dfrac{\hat{\sigma}_{i} q_{t/2} - \mu_a - \sqrt{\sigma_{ij}}x_{1}}{\sqrt{1 - \sigma_{ij}}}\right) = f_{\{1\}}(\sqrt{\sigma_{ij}}; a, t, \hat{\sigma}_{i})$; applying Taylor expansion and Lemma \ref{t-lem-3}, we immediately have
\begin{eqnarray}
\Phi \left(\dfrac{\hat{\sigma}_{i} q_{t/2} - \mu_a - \sqrt{\sigma_{ij}}x_{1}}{\sqrt{1 - \sigma_{ij}}}\right) = \sum_{i_1 \leq 7} C^a_{t/2, \hat{\sigma}_{i}}(i_1)g_{i_1}(x_1)(\sqrt{\sigma_{ij}})^{i_1}/(i_1!) + R(\sigma_{ij}), \label{t-eq-lem4-4}
\end{eqnarray}
where $R(\sigma_{ij})$ is the Lagrange residual term in the Taylor's expansion, satisfying $|R(\sigma_{ij})| \lesssim |f(x_1;\hat{\sigma}_{i})| \sigma_{ij}^4$ up to a univeral constant not depending on $x_1$, with $f(x_1;\hat{\sigma}_{i})$ being a finite order polynomial function of $x_1$, and the coefficients are expected value of bounded functions in term of $\hat{\sigma}_{i}$. Combining \eqref{t-eq-lem4-3} and \eqref{t-eq-lem4-4} leads to 
\begin{equation}
    \begin{split}
        &  P^{\ast}(Z_{i} < \hat{\sigma}_{i} a_1 q_{t/2}, Z_j < \hat{\sigma}_{j} a_2 q_{t/2}) \\
        = &\sum_{i_1 + i_2 \leq 7} C^i_{t/2,\hat{\sigma}_{i}}(i_1)C^j_{t/2, \hat{\sigma}_{j}}(i_2)(\sqrt{\sigma_{ij}})^{i_1 + i_2}E(g_{i_1}(X_1)g_{i_2}(X_1))/(i_1!i_2!) + O(\sigma_{ij}^4) \\
        = &\sum_{i_1 = 0}^3 C^i_{t/2, \hat{\sigma}_{i}}(i_1)C^j_{t/2, \hat{\sigma}_{j}}(i_1)\sigma_{ij}^{i_1}E(g_{i_1}^2(X_1))/(i_1!)^2 + O(\sigma_{ij}^4), \label{t-eq-lem4-5}\\
    \end{split}
\end{equation}
where the second ``=" is because $E(g_{i_1}(X_1)g_{i_2}(X_1)) = 0$ for $i_1\neq i_2$ based on Lemma \ref{t-lem-3}; $X_1$ is a standard normal random variable. Furthermore, it is straightforward to check that for $i_1 = 0$,
\begin{eqnarray*}
C^i_{t/2, \hat{\sigma}_{i}}(i_1)C^j_{t/2, \hat{\sigma}_{j}}(i_1)\sigma_{ij}^{i_1}E(g_{i_1}^2(X_1))/(i_1!)^2 = P^{\ast}(Z_i < \hat{\sigma}_{i}q_{t/2})P^{\ast}(Z_{j} < \hat{\sigma}_{j}q_{t/2}),
\end{eqnarray*}
which together with \eqref{t-eq-lem4-5} leads to
\begin{eqnarray*}
h_{i,j}(1,1) &=&  P^{\ast}(Z_{i} < \hat{\sigma}_{i} a_1 q_{t/2}, Z_j < \hat{\sigma}_{j} a_2 q_{t/2}) -  P^{\ast}(Z_{i} < \hat{\sigma}_{i} a_1 q_{t/2}) P^{\ast}(Z_j < \hat{\sigma}_{j} a_2 q_{t/2})\\
&=& \sum_{i_1 = 1}^3 C^i_{t/2, \hat{\sigma}_{i}}(i_1)C^j_{t/2, \hat{\sigma}_{j}}(i_1)\sigma_{ij}^{i_1}E(g_{i_1}^2(X_1))/(i_1!)^2 + O(\sigma_{ij}^4). 
\end{eqnarray*}
Similarly, we can derive $h_{i,j}(1,-1)$, $h_{i,j}(-1,1)$, and $h_{i,j}(-1,-1)$. As consequence, we have
\begin{equation*}
    \begin{split}
        &E(t_it_j) - E(t_i) E(t_j)= \sum_{a_1, a_2 \in \{1, -1\}}a_1a_2 E(h_{i,j}(a_1, a_2))\\
        = & E(\sum_{i_1 = 1}^3 C^i_{t/2, \hat{\sigma}_{i}}(i_1)C^j_{t/2, \hat{\sigma}_{j}}(i_1)\sigma_{ij}^{i_1}E\{g_{i_1}^2(X_1)\}/(i_1!)^2\\
        - & \sum_{i_1 = 1}^3 C^i_{1-t/2, \hat{\sigma}_{i}}(i_1)C^j_{t/2,\hat{\sigma}_{j}}(i_1)\sigma_{ij}^{i_1}E\{g_{i_1}^2(X_1)\}/(i_1!)^2\\
        - &\sum_{i_1 = 1}^3 C^i_{t/2, \hat{\sigma}_{i}}(i_1)C^j_{1-t/2, \hat{\sigma}_{j}}(i_1)\sigma_{ij}^{i_1}E\{g_{i_1}^2(X_1)\}/(i_1!)^2\\
        + &\sum_{i_1 = 1}^3 C^i_{1-t/2, \hat{\sigma}_{i}}(i_1)C^j_{1-t/2, \hat{\sigma}_{j}}(i_1)\sigma_{ij}^{i_1}E\{g_{i_1}^2(X_1)\}/(i_1!)^2) + O(\sigma_{ij}^4)\\
        = & \sum_{i_1 = 1}^3 \frac{\sigma_{ij}^{i_1} E\{g_{i_1}^2(X_1)\}}{(i_1!)^2} E(\left\{C^i_{t/2, \hat{\sigma}_{i}}(i_1)-C^i_{1-t/2, \hat{\sigma}_{i}}(i_1)\right\}\left\{C^j_{t/2, \hat{\sigma}_{j}}(i_1)-C^j_{1-t/2, \hat{\sigma}_{j}}(i_1)\right\})+O(\sigma_{ij}^4).
\end{split} 
\end{equation*}
We are able to check:
\begin{itemize}

   \item when $\mu_i = 0$, for $i_1 = 1$ or $3$, 
   \begin{eqnarray*}
    C^i_{t/2, \hat{\sigma}_{i}}(i_1)-C^i_{1-t/2, \hat{\sigma}_{i}}(i_1) = 0,
    \end{eqnarray*}
    and for $i_1 = 2$ 
     \begin{eqnarray*}
    C^i_{t/2, \hat{\sigma}_{i}}(i_1)-C^i_{1-t/2, \hat{\sigma}_{i}}(i_1) = -2 \phi(\hat{\sigma}_{i}q_{t/2}) \hat{\sigma}_{i}q_{t/2};
    \end{eqnarray*}
    
    \item when $\mu_i \neq 0$, for $i_1 = 1$,
    \begin{eqnarray*}
    C^i_{t/2, \hat{\sigma}_{i}}(i_1)-C^i_{1-t/2, \hat{\sigma}_{i}}(i_1) = -\phi(\hat{\sigma}_{i}|q_{t/2}|+\mu_i) + \phi(\hat{\sigma}_{i}|q_{t/2}|-\mu_i);
    \end{eqnarray*}
    for $i_1 = 2$,
    \begin{eqnarray*}
    C^i_{t/2, \hat{\sigma}_{i}}(2)-C^i_{1-t/2, \hat{\sigma}_{i}}(2) &=& -\phi(\hat{\sigma}_{i}q_{t/2}- \mu_i)(\hat{\sigma}_{i}q_{t/2} - \mu_i) - \phi(\hat{\sigma}_{i}q_{t/2} + \mu_i)(\hat{\sigma}_{i}q_{t/2} + \mu_i) \\
    &=& \phi(\hat{\sigma}_{i}|q_{t/2}|+ \mu_i)(\hat{\sigma}_{i}|q_{t/2}| + \mu_i) + \phi(\hat{\sigma}_{i}|q_{t/2}| - \mu_i)(\hat{\sigma}_{i}|q_{t/2}| - \mu_i).
    \end{eqnarray*}

    and for $i_1 = 3$,
    \begin{eqnarray*}
    &&C^i_{t/2, \hat{\sigma}_{i}}(3)-C^i_{1-t/2, \hat{\sigma}_{i}}(3) \\ &=& - \phi(\hat{\sigma}_{i} q_{t/2}-\mu_i)\left\{ (\hat{\sigma}_{i}q_{t/2}-\mu_i)^2 - 1\right\} + \phi(\hat{\sigma}_{i}q_{t/2}+\mu_i)\left\{ (\hat{\sigma}_{i}q_{t/2}+\mu_i)^2 -1 \right\}\\
    &=&  - \phi(\hat{\sigma}_{i}|q_{t/2}|+\mu_i)\left\{ (\hat{\sigma}_{i}|q_{t/2}|+\mu_i)^2 - 1\right\} + \phi(\hat{\sigma}_{i}|q_{t/2}|-\mu_i)\left\{ (\hat{\sigma}_{i}|q_{t/2}|-\mu_i)^2 -1 \right\}
    \end{eqnarray*}
    
    \end{itemize}
    
    We can conclude:
    
    \begin{itemize}

    \item when $\mu_i \neq 0$ and $\mu_j \neq 0$, 
    \begin{eqnarray*}
    &&E(t_it_j) - E(t_i) E(t_j) \\&=& E(\left\{C^i_{t/2, \hat{\sigma}_i}(1)-C^i_{1-t/2, \hat{\sigma}_i}(1)\right\}\left\{C^j_{t/2, \hat{\sigma}_j}(1)-C^j_{1-t/2, \hat{\sigma}_j}(1)\right\})\sigma_{ij}\\
     &&+ \frac{1}{2}E(\left\{C^i_{t/2, \hat{\sigma}_i}(2)-C^i_{1-t/2, \hat{\sigma}_i}(2)\right\}\left\{C^j_{t/2, \hat{\sigma}_j}(2)-C^j_{1-t/2, \hat{\sigma}_j}(2)\right\})\sigma_{ij}^2\\
     &&+\frac{E\{g_3^2(X_1)\}}{(3!)^2} E(\left\{C_{t/2, \hat{\sigma}_i}^i(3) -  C_{1-t/2, \hat{\sigma}_i}^i(3)\right\}\left\{C_{t/2, \hat{\sigma}_j}^j(3) -  C_{1-t/2, \hat{\sigma}_j}^j(3)\right\})\sigma_{ij}^3+O(\sigma_{ij}^4);
    \end{eqnarray*}
    which verifies (P1). 
    
    \item when $\mu_i \neq 0$ but $\mu_j = 0$, 
    \begin{eqnarray*}
    && E(t_it_j) - E(t_i) E(t_j) = -\sigma_{ij}^2 q_{t/2}E(\phi(q_{t/2}\hat{\sigma}_j)\hat{\sigma}_j \left\{ C_{t/2, \hat{\sigma}_j}^j(2)) - E(C_{1-t/2, \hat{\sigma}_i}^j(2) \right\}) + O(\sigma_{ij}^4). \\
    \end{eqnarray*}
    Since $C_{t/2, \hat{\sigma}_j}^j(i))$ is a bounded function for $\forall t \in (0, 1)$, $\hat{\sigma}_j \in (0, \infty)$ and $i \in \{1, \dots, 9\}$. Thus, there exist a 
    $$C_t^{\max} = \sup_{\mu} \left\{q_{t/2}E(\phi(q_{t/2}\hat{\sigma}_j)\hat{\sigma}_j \left\{ C_{t/2, \hat{\sigma}_j}^j(2)) - E(C_{1-t/2, \hat{\sigma}_i}^j(2) \right\})\right\}.$$
    Moreover, there exist a unique root $\mu_t \in (0, \infty)$ of 
    $$H(\mu_j) = E(\phi(q_{t/2}\hat{\sigma}_j)\hat{\sigma}_j \left\{ C_{t/2, \hat{\sigma}_j}^j(2)) - E(C_{1-t/2, \hat{\sigma}_i}^j(2) \right\}),$$ 
    such that when $\mu_j \in (-\mu_t, \mu_t)$, $C_t^{\max} \geq H(\mu_j) > 0$, where 
    \begin{eqnarray*}
    O(\sigma_{ij}^4)< E(t_it_j) - E(t_i) E(t_j) \leq   C_t^{\max} \sigma_{ij}^2 + O(\sigma_{ij}^4), 
    \end{eqnarray*}
    and 
    \begin{eqnarray*}
    E(t_it_j) - E(t_i) E(t_j)< O(\sigma_{ij}^4),
    \end{eqnarray*}
    otherwise. Therefore, (P2) is proved.

    \item when $\mu_i = 0$ and $\mu_j = 0$,
    \begin{eqnarray*}
    E(t_it_j) - E(t_i) E(t_j) = 2 E(\phi^2(q_{t/2}\hat{\sigma}_i)(q_{t/2}\hat{\sigma}_i)^2) \sigma_{ij}^2 + O(\sigma_{ij}^4),
    \end{eqnarray*}
    which is our (P3) and we complete the proof of this lemma. 
\end{itemize}
\end{proof}

\begin{lemma}\label{t-lem-6}
Recall the definition:  $t_i = \mathrm{1}(|T_i| > |q_{t/2}|) = \mathrm{1}(P_i < t)$; we have  
\begin{equation*}
    \begin{split}
        &\sum_{i\neq j\neq k \neq l} \Big\{E(t_i t_j t_k t_l) - E(t_i t_j t_k)E(t_l) - E(t_i t_j t_l)E(t_k) - E(t_i t_k t_l)E(t_j) - E(t_j t_k t_l)E(t_i)\\
        + &E(t_i t_j)E(t_k)E(t_l) + E(t_i t_k)E(t_j)E(t_l) + E(t_i t_l)E(t_j)E(t_k) + E(t_j t_k)E(t_i)E(t_l)\\
        + &E(t_j t_l)E(t_i)E(t_k) + E(t_k t_l)E(t_i)E(t_j) -3E(t_i)E(t_j)E(t_k)E(t_l)\Big\}\\
        = & 3 E\Big\{\left[\sum_{i\neq j; i,j\in \mathcal{H}_1}\left\{C_{t/2, \hat{\sigma}_i}^i(1) -  C_{1-t/2, \hat{\sigma}_i}^i(1)\right\}\left\{C_{t/2, \hat{\sigma}_j}^j(1) -  C_{1-t/2, \hat{\sigma}_j}^j(1)\right\}\sigma_{ij}\right]^2\Big\} \\  &+ 3E\Big\{\left[\sum_{i\neq j; i,j\in \mathcal{H}_1}\left\{C_{t/2, \hat{\sigma}_i}^i(1) -  C_{1-t/2, \hat{\sigma}_i}^i(1)\right\}\left\{C_{t/2, \hat{\sigma}_j}^j(1) -  C_{1-t/2, \hat{\sigma}_j}^j(1)\right\}\sigma_{ij}\right]\\
    &\times \left[\sum_{i\neq j}\left\{C_{t/2, \hat{\sigma}_i}^i(2) -  C_{1-t/2, \hat{\sigma}_i}^i(2)\right\}\left\{C_{t/2, \hat{\sigma}_j}^j(2) -  C_{1-t/2, \hat{\sigma}_j}^j(2)\right\}\sigma_{ij}^2\right]\Big\} \\
        & + O\left( \sum_{i\in \mathcal{H}_1}\left( \sum_{j\neq i, j\in\mathcal{H}_1} |\sigma_{ij}|\right)^3 \right) + O\left( \sum_{i\neq j} |\sigma_{ij}| \sum_{k\neq i, k\in \mathcal{H}_1} |\sigma_{ik}|\sum_{l\neq j, l\in \mathcal{H}_1}|\sigma_{jl}| \right)+ O\left( p^2 \sum_{i\neq j}\sigma_{ij}^4\right). \\
    \end{split}
\end{equation*}
\end{lemma}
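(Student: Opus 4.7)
My starting observation is algebraic: the twelve-term alternating sum inside $\sum_{i\neq j\neq k\neq l}\{\cdots\}$ is exactly the fourth central moment $E[(t_i-Et_i)(t_j-Et_j)(t_k-Et_k)(t_l-Et_l)]$. This is verified by expanding the product over its $2^4$ sign selections and collecting by how many $-E t_m$ factors are chosen; the coefficient $-3$ on the all-marginal term arises as $-4+1$ from the four $|S|=1$ pieces and the single $|S|=0$ piece. The task therefore reduces to computing this fourth central moment for a generic ordered quadruple of distinct indices and summing.

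My next step is to reuse the sign-decomposition-plus-Taylor-expansion mechanism of Lemma~\ref{t-lem-4}, now extended to four indices. Conditional on $\{\hat\sigma\}$, I would write $t_m = \sum_{a_m\in\{1,-1\}}\mathbf{1}(a_m Z_m > \hat\sigma_m|q_{t/2}|)$, so that
\[
E^{\ast}[\tilde t_i\tilde t_j\tilde t_k\tilde t_l] = \sum_{a_1,a_2,a_3,a_4\in\{1,-1\}} a_1 a_2 a_3 a_4\,\tilde h_{i,j,k,l}(a_1,a_2,a_3,a_4),
\]
where $\tilde h_{i,j,k,l}$ denotes the four-way conditional probability with all products of lower-order marginals subtracted. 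Each such probability is a smooth function of the six off-diagonal correlations $\sigma_{ij},\sigma_{ik},\sigma_{il},\sigma_{jk},\sigma_{jl},\sigma_{kl}$, and I would Taylor-expand in these six variables up to total degree three with an $O(\sigma^4)$ Lagrange remainder. By Lemma~\ref{t-lem-3}, each mixed partial derivative factors as a product $\prod_m C^m_{t/2,\hat\sigma_m}(i_m)\,g_{i_m}(x_m)$, and the Gaussian orthogonality $E\{g_a(X)g_b(X)\}=0$ for $a\neq b$ annihilates every unpaired cross-term. What survives are Isserlis-type pairings of the six correlation variables.

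After centering, only pairings that genuinely couple all four indices contribute, and the leading pieces come from the three Isserlis pairings $(\sigma_{ij},\sigma_{kl})$, $(\sigma_{ik},\sigma_{jl})$, $(\sigma_{il},\sigma_{jk})$. Pairing two first-order-in-$\sigma$ factors produces, on each index $m$, the difference $C^m_{t/2,\hat\sigma_m}(1)-C^m_{1-t/2,\hat\sigma_m}(1)$, which by \eqref{t-eq-lem3-0} vanishes whenever $\mu_m=0$ and so restricts the surviving sum to $i,j,k,l\in\mathcal H_1$. Recombining the three Isserlis pairings and the outer sum over ordered distinct quadruples reassembles them into the squared double sum of the first main term, with the factor $3$ arising from the three pairings. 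The analogous first-order $\times$ second-order pairing (one $g_1$ pair combined with one $g_2$ pair) gives the second main term, where the $\mathcal H_1$ restriction attaches only to the $g_1$ pair.

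Everything not of these two types is absorbed into the three $O(\cdot)$ remainders. The ``triple $g_1$'' contributions that leave one index unmatched produce the bound $O(\sum_{i\in\mathcal H_1}(\sum_{j\in\mathcal H_1, j\neq i}|\sigma_{ij}|)^3)$; the mixed $g_2 \times g_1 \times g_1$ triples produce the bound $O(\sum_{i\neq j}|\sigma_{ij}|\sum_{k\neq i, k\in\mathcal H_1}|\sigma_{ik}|\sum_{l\neq j, l\in\mathcal H_1}|\sigma_{jl}|)$; and the total-degree-four Taylor--Lagrange remainder, after multiplication by the $O(p^2)$ outer count of ordered distinct quadruples with two free indices, gives the bound $O(p^2\sum_{i\neq j}\sigma_{ij}^4)$. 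I expect the main obstacle to be not the analysis but the combinatorial bookkeeping in the previous two steps: carefully enumerating which partial-derivative products survive the Gaussian orthogonality, which of those reassemble into the three Isserlis pairings, and which get pigeonholed into exactly one of the three remainder buckets without double-counting. The underlying algebra itself is a direct iteration of the mechanism already established in Lemma~\ref{t-lem-4}.
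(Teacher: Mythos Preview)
Your proposal is correct and follows essentially the same route as the paper's proof. The paper likewise rewrites the twelve-term expression as a $\sum_{a_1,\ldots,a_4\in\{1,-1\}}a_1a_2a_3a_4$ sign-decomposition of centred probabilities, Taylor-expands the four-way joint probability in the six correlations via Lemma~\ref{t-lem-3}, uses the orthogonality $E\{g_{i_1}(X)g_{i_2}(X)\}=0$ to reduce to the index-matched terms in \eqref{t-eq-lem6-4}, and then separates the surviving monomials into exactly your two buckets: the cases (A1)--(A3) where the subscripts of the nonzero $\sigma$'s miss at least one letter (and hence cancel against the subtracted lower-order pieces), and the cases (B1)--(B2) where all four letters appear. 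Your ``Isserlis pairings'' are the paper's case (B1), and your two triple-product remainders are precisely the two subcases of (B2). Your upfront identification of the bracketed expression with $E\prod_m(t_m-Et_m)$ is a clean way to motivate why only fully-connecting monomials survive, but the actual combinatorial enumeration you will have to carry out is the same as the paper's.
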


\begin{proof}
We can write
\begin{equation}
    \begin{split}
        &E(t_i t_j t_k t_l) - E(t_i t_j t_k)E(t_l) - E(t_i t_j t_l)E(t_k) - E(t_i t_k t_l)E(t_j) - E(t_j t_k t_l)E(t_i)\\
        + &E(t_i t_j)E(t_k)E(t_l) + E(t_i t_k)E(t_j)E(t_l) + E(t_i t_l)E(t_j)E(t_k) + E(t_j t_k)E(t_i)E(t_l)\\
        + &E(t_j t_l)E(t_i)E(t_k) + E(t_k t_l)E(t_i)E(t_j) -3E(t_i)E(t_j)E(t_k)E(t_l)\\
        = &  \sum_{a_1, a_2, a_3, a_4 = \{1, -1\}}a_1 a_2 a_3 a_4 E(h_{i, j, k, l; t}(a_1, a_2, a_3, a_4)),
    \end{split} \label{t-eq-lem6-1}
\end{equation}
where $h_{i, j, k, l}(a_1, a_2, a_3, a_4) = P^{\ast}(Z_i < a_1 \hat{\sigma}_i q_{t/2}, Z_j < a_2 \hat{\sigma}_j q_{t/2}, Z_k < a_3 \hat{\sigma}_k q_{t/2}, Z_l < a_4 \hat{\sigma}_l q_{t/2}) - P^{\ast}(Z_i < a_1 \hat{\sigma}_i q_{t/2}, Z_j < a_2\hat{\sigma}_j q_{t/2}, Z_k < a_3 \hat{\sigma}_k q_{t/2})P^{\ast}(Z_l < a_4 \hat{\sigma}_l q_{t/2}) - P^{\ast}(Z_i < a_1 \hat{\sigma}_i q_{t/2}, Z_j < a_2 \hat{\sigma}_j q_{t/2}, Z_l < a_4 \hat{\sigma}_l q_{t/2})P^{\ast}(Z_k < a_3 \hat{\sigma}_k q_{t/2}) - P^{\ast}(Z_i < \hat{\sigma}_i a_1 q_{t/2}, Z_k < a_3\hat{\sigma}_k q_{t/2}, Z_l < a_4 \hat{\sigma}_l q_{t/2})P^{\ast}(Z_j < a_2 \hat{\sigma}_j q_{t/2}) - P^{\ast}(Z_j < a_2 \hat{\sigma}_j q_{t/2}, Z_k < a_3 \hat{\sigma}_k q_{t/2}, Z_l < a_4 \hat{\sigma}_l q_{t/2})P^{\ast}(Z_i < a_1 \hat{\sigma}_i q_{t/2}) + P^{\ast}(Z_i < a_1 \hat{\sigma}_i q_{t/2}, Z_j < a_2 \hat{\sigma}_j q_{t/2})P^{\ast}(Z_k < a_3 \hat{\sigma}_k q_{t/2})P^{\ast}(Z_l < a_4 \hat{\sigma}_l q_{t/2}) + P^{\ast}(Z_i < a_1 \hat{\sigma}_i q_{t/2}, Z_k < a_3 \hat{\sigma}_k q_{t/2})P^{\ast}(Z_j < a_2 \hat{\sigma}_j q_{t/2})P^{\ast}(Z_l < a_4 \hat{\sigma}_l q_{t/2}) + P^{\ast}(Z_i < a_1 \hat{\sigma}_i q_{t/2}, Z_l < a_4 \hat{\sigma}_l q_{t/2})P^{\ast}(Z_j < a_2 \hat{\sigma}_j q_{t/2})P^{\ast}(Z_k < a_3 \hat{\sigma}_k q_{t/2}) + P^{\ast}(Z_j < a_2 \hat{\sigma}_j q_{t/2}, Z_k < a_3 \hat{\sigma}_k q_{t/2})P^{\ast}(Z_i < a_1 \hat{\sigma}_i q_{t/2})P^{\ast}(Z_l < a_4 \hat{\sigma}_l q_{t/2}) + P^{\ast}(Z_j < a_2 \hat{\sigma}_j q_{t/2}, Z_l < a_4 \hat{\sigma}_l q_{t/2})P^{\ast}(Z_i < a_1 \hat{\sigma}_i q_{t/2})P^{\ast}(Z_k < a_3 \hat{\sigma}_k q_{t/2}) + P^{\ast}(Z_k < a_3 \hat{\sigma}_k q_{t/2}, Z_l < a_4 \hat{\sigma}_l q_{t/2})P^{\ast}(Z_i < a_1 \hat{\sigma}_i q_{t/2})P^{\ast}(Z_j < a_2 \hat{\sigma}_j q_{t/2}) - 3P^{\ast}(Z_i < a_1\hat{\sigma}_i q_{t/2})P^{\ast}(Z_j < a_2 \hat{\sigma}_j q_{t/2})P^{\ast}(Z_k < a_3 \hat{\sigma}_k q_{t/2})P^{\ast}(Z_l < a_4 \hat{\sigma}_l q_{t/2})$

Next, we consider the terms in $h_{i,j,k,l}(1,1,1,1)$; those for other values of $a_1,\ldots, a_4$ can be similarly derived. Without loss of generality, we assume $\sigma_{ij}\geq0$ for $i=1,\ldots, p; j=1, \ldots, p$. Note that $h_{i,j,k,l}(1,1,1,1)$  contains 16 terms with the last three terms identical, and its first term is given by
\begin{equation}
    \begin{split}
         & P^{\ast}(Z_i < a_1 \hat{\sigma}_i q_{t/2}, Z_j < a_2 \hat{\sigma}_j q_{t/2}, Z_k < a_3 \hat{\sigma}_k q_{t/2}, Z_l < a_4 \hat{\sigma}_l q_{t/2})\\
         = & \int \Phi \left(\dfrac{q_{t/2}\hat{\sigma}_i - \mu_i - \sqrt{\sigma_{ij}}x_{1} - \sqrt{\sigma_{ik}}x_{2} - \sqrt{\sigma_{il}}x_{3}}{\sqrt{1 - \sigma_{ij} - \sigma_{ik} - \sigma_{il}}}\right)\Phi\left (\dfrac{q_{t/2}\hat{\sigma}_j - \mu_j - \sqrt{\sigma_{ij}}x_{1} - \sqrt{\sigma_{jk}}x_{4} - \sqrt{\sigma_{jl}}x_{5}}{\sqrt{1 - \sigma_{ij} - \sigma_{jk} - \sigma_{jl}}}\right)\\
         &\Phi \left(\dfrac{q_{t/2}\hat{\sigma}_k - \mu_k - \sqrt{\sigma_{ik}}x_{2} - \sqrt{\sigma_{jk}}x_{4} - \sqrt{\sigma_{kl}}x_{6}}{\sqrt{1 - \sigma_{ik} - \sigma_{jk} - \sigma_{kl}}}\right)\Phi\left (\dfrac{q_{t/2}\hat{\sigma}_l - \mu_l - \sqrt{\sigma_{il}}x_{3} - \sqrt{\sigma_{jl}}x_{5} - \sqrt{\sigma_{kl}}x_{6}}{\sqrt{1 - \sigma_{il} - \sigma_{jl} - \sigma_{kl}}}\right)\\
         &\phi(x_1)\phi(x_2)\phi(x_3)\phi(x_4)\phi(x_5) \phi(x_6)dx_1 dx_2 dx_3 dx_4 dx_5 dx_6.\\
    \end{split} \label{t-eq-lem6-2}
\end{equation}
Applying Taylor expansion and Lemma \ref{t-lem-3}, we have
\begin{equation*}
    \begin{split}
        &\Phi \left(\dfrac{q_{t/2}\hat{\sigma}_i - \mu_i - \sqrt{\sigma_{ij}}x_{1} - \sqrt{\sigma_{ik}}x_{2} - \sqrt{\sigma_{il}}x_{3}}{\sqrt{1 - \sigma_{ij} - \sigma_{ik} - \sigma_{il}}}\right)\\
        = & \sum_{i_1 + i_2 + i_3 \leq 7}C^{i}_{t/2, \hat{\sigma}_i}(i_1 + i_2 + i_3)g_{i_1}(x_1)g_{i_2}(x_2)g_{i_3}(x_3)(\sqrt{\sigma_{ij}})^{i_1}(\sqrt{\sigma_{ik}})^{i_2}(\sqrt{\sigma_{il}})^{i_3}/(i_1 + i_2 + i_3)! + R(\vec{\rho}),
    \end{split}
\end{equation*}
where $R(\vec{\rho})$ is the Lagrange residual term in the Taylor's expansion, and $|R(\vec{\rho})| \lesssim |f(x_1, x_2, x_3; \hat{\sigma}_i)|(|\sigma_{ij}|^4 + |\sigma_{ik}|^4 + |\sigma_{il}|^4)$ up to a universal constant not depending on $x_1, x_2, x_3$, where
$f(x_1, x_2, x_3; \hat{\sigma}_i)$ is a finite order polynomial function of $\{x_1, x_2, x_3\}$ with bounded coefficient for $\forall \hat{\sigma}_i \in (0, \infty)$. Similarly, we can apply Taylor expansion to other $\Phi(\cdot)$ terms in \eqref{t-eq-lem6-2}; we obtain
\begin{equation}\label{t-eq-lem6-3}
    \begin{split}
         & P^{\ast}(Z_i < a_1 \hat{\sigma}_i q_{t/2}, Z_j < a_2 \hat{\sigma}_j q_{t/2}, Z_k < a_3 \hat{\sigma}_k q_{t/2}, Z_l < a_4 \hat{\sigma}_l q_{t/2})\\
         = &  \int \sum_{\sum_{j = 1}^{12} i_j \leq 7}C^{i}_{t/2, \hat{\sigma}_i}(i_1 + i_2 + i_3)C^{j}_{t/2, \hat{\sigma}_j}(i_7 + i_4 + i_5)C^{k}_{t/2, \hat{\sigma}_k}(i_8 + i_{10} + i_6)C^{l}_{t/2, \hat{\sigma}_l}(i_{9} + i_{11} + i_{12})\\
         & g_{i_1}(x_1) g_{i_7}(x_1)g_{i_2}(x_2)g_{i_8}(x_2)g_{i_3}(x_3)g_{i_{9}}(x_3)g_{i_{4}}(x_4)g_{i_{10}}(x_4)g_{i_{5}}(x_5)g_{i_{11}}(x_5)g_{i_{6}}(x_6)g_{i_{12}}(x_6)\\
        &(\sqrt{\sigma_{ij}})^{i_1 + i_7}(\sqrt{\sigma_{ik}})^{i_2 + i_8}(\sqrt{\sigma_{il}})^{i_3 + i_9}(\sqrt{\sigma_{jk}})^{i_4 + i_{10}}(\sqrt{\sigma_{jl}})^{i_5 + i_{11}}(\sqrt{\sigma_{kl}})^{i_6 + i_{12}}\\
        &/((i_1 + i_2 + i_3)!(i_7 + i_4 + i_5)!(i_8 + i_{10} + i_6)!(i_{9} + i_{11} + i_{12})!)\\
        &\phi(x_1)\phi(x_2)\phi(x_3)\phi(x_4)\phi(x_5)\phi(x_6)dx_1 dx_2 dx_3 dx_4 dx_5 dx_6\\
        &+ O\left(|\sigma_{ij}|^4 + |\sigma_{ik}|^4 +|\sigma_{il}|^4 + |\sigma_{jk}|^4 + |\sigma_{jl}|^4 + |\sigma_{kl}|^4\right).
    \end{split}
\end{equation}
Applying Lemma \ref{t-lem-3}, \eqref{t-eq-lem6-3} can be further simplified to be
\begin{eqnarray}
&&P^{\ast}(Z_i < a_1 \hat{\sigma}_i q_{t/2}, Z_j < a_2 \hat{\sigma}_j q_{t/2}, Z_k < a_3 \hat{\sigma}_k q_{t/2}, Z_l < a_4 \hat{\sigma}_l q_{t/2}) \nonumber \\
&=& \sum_{\sum_{j=1}^6 i_j \in \{0, 1, 2, 3\} } C^{i}_{t/2, \hat{\sigma}_i}(i_1 + i_2 + i_3)C^{j}_{t/2, \hat{\sigma}_j}(i_1 + i_4 + i_5)C^{k}_{t/2, \hat{\sigma}_k}(i_2 + i_4 + i_6)C^{l}_{t/2, \hat{\sigma}_l}(i_3 + i_5 + i_6) \nonumber \\
&& \hspace{0.4in} \times \frac{\prod_{j=1}^6 E\left\{g_{i_j}^2(X_1)\right\} \sigma_{ij}^{i_1} \sigma_{ik}^{i_2}\sigma_{il}^{i_3}\sigma_{jk}^{i_4}\sigma_{jl}^{i_5} \sigma_{kl}^{i_6}}{(i_1 + i_2 + i_3)!(i_1 + i_4 + i_5)!(i_2 + i_5 + i_6)!(i_3 + i_5 + i_6)!} \nonumber \\
&&+ O\left(|\sigma_{ij}|^4 + |\sigma_{ik}|^4 +|\sigma_{il}|^4 + |\sigma_{jk}|^4 + |\sigma_{jl}|^4 + |\sigma_{kl}|^4\right). \label{t-eq-lem6-4}
\end{eqnarray}
where $X_1$ is a standard normal random variable. Recall that \eqref{t-eq-lem6-4} is the first term of $h_{i,j,k,l}(1,1,1,1)$; we next discuss how each subitem under the summation of the main part on the right hand side of \eqref{t-eq-lem6-4} associates with other terms contributes in the evaluation of \eqref{t-eq-lem6-1}. In particular, we shall discuss the possible values of $i_1, \ldots, i_6$; clearly if one of them is zero, its corresponding ``$\sigma$" term in  \eqref{t-eq-lem6-4} does not appear in the expression. 

\begin{itemize}
    \item[(A1)] If the subscripts of the set of $\sigma$'s whose corresponding powers are nonzero contain and only contain three letters, and therefore one letter does not appear in these subscripts, the subitem in \eqref{t-eq-lem6-4} is identical to the corresponding subitem in one and only one of the second to fifth item in $h_{i,j,k,l}(1,1,1,1)$, whose coefficients are all ``$-$". As a consequence, they cancel each other. 
    
    For example $i_3 = i_5 = i_6 = 0$ but at least two of $\{i_1, i_2, i_4\}$ are nonzero, so that the set of $\sigma$ whose corresponding powers are nonzero is from $\{\sigma_{ij}, \sigma_{ik}, \sigma_{jk}\}$, and $\{i,j,k\}$ all appear in the subscript but $l$ does not. The subitem in \eqref{t-eq-lem6-4} is given by 
    \begin{eqnarray*}
    &&C^{i}_{t/2, \hat{\sigma}_i}(i_1 + i_2 )C^{j}_{t/2, \hat{\sigma}_j}(i_1 + i_4 )C^{k}_{t/2, \hat{\sigma}_k}(i_2 + i_4 ) C^{l}_{t/2, \hat{\sigma}_l}(0) \nonumber \\
&& \hspace{0.4in} \times \frac{E\left\{g_{i_1}^2(X_1)\right\}E\left\{g_{i_2}^2(X_1)\right\} E\left\{g_{i_4}^2(X_1)\right\} \sigma_{ij}^{i_1} \sigma_{ik}^{i_2}\sigma_{jk}^{i_4}}{(i_1 + i_2 )!(i_1 + i_4 )!(i_2 + i_4)!},
    \end{eqnarray*}
    which is identical and only identical to a corresponding subitem in the Taylor expansion for the second item in $h_{i,j,k,l}(1,1,1,1)$ by noting that $C^{l}_{t/2, \hat{\sigma}_i}(0) = P^{\ast}(Z_l< \hat{\sigma}_i q_{t/2})$; in particular, similarly to the development of \eqref{t-eq-lem6-4}, we are able to derive:
    \begin{eqnarray}
&&P^{\ast}(Z_i < \hat{\sigma}_i q_{t/2}, Z_j < \hat{\sigma}_j q_{t/2}, Z_k < \hat{\sigma}_k q_{t/2}) \nonumber \\
&=& \sum_{i_1 + i_2 + i_4 \in \{0, 1, 2, 3\} } C^{i}_{t/2, \hat{\sigma}_i}(i_1 + i_2 )C^{j}_{t/2, \hat{\sigma}_j}(i_1 + i_4 )C^{k}_{t/2, \hat{\sigma}_k}(i_2 + i_4 ) \nonumber \\
&& \hspace{0.4in} \times \frac{E\left\{g_{i_1}^2(X_1)\right\}E\left\{g_{i_2}^2(X_1)\right\} E\left\{g_{i_4}^2(X_1)\right\} \sigma_{ij}^{i_1} \sigma_{ik}^{i_2}\sigma_{jk}^{i_4}}{(i_1 + i_2 )!(i_1 + i_4 )!(i_2 + i_4)!} \nonumber \\
&&+ O\left(|\sigma_{ij}|^4 + |\sigma_{ik}|^4 +|\sigma_{jk}|^4\right). \label{t-eq-lem6-5}
\end{eqnarray}
    
    \item[(A2)] If the subscripts of the set $\sigma$'s whose corresponding powers are nonzero contain only two letter in their subscripts, indicating that one and one and only of $i_j, j=1,\ldots, 6$ is nonzero, we can find two of the second to fifth terms and one of the sixth to thirteenth terms in $h_{i,j,k,l}(1,1,1,1)$  contain the same corresponding subitem in \eqref{t-eq-lem6-4}. Note that the coefficients of the second to fifth terms in  $h_{i,j,k,l}(1,1,1,1)$ are ``-", while those of the sixth to thirteenth terms in $h_{i,j,k,l}(1,1,1,1)$ are "$+$", therefore they are cancelled. 
    
    For example, if we have only $i_1>0$, The subitem in \eqref{t-eq-lem6-4} is given by 
    \begin{eqnarray*}
    C^{k}_{t/2, \hat{\sigma}_k}(0) C^{l}_{t/2, \hat{\sigma}_l}(0)  C^{i}_{t/2, \hat{\sigma}_i}(i_1)C^{j}_{t/2, \hat{\sigma}_j}(i_1) \frac{E\left\{g_{i_1}^2(X_1)\right\} \sigma_{ij}^{i_1}}{(i_1!)^2},
    \end{eqnarray*}
    which is identical to the corresponding terms in $P^{\ast}(Z_i< \hat{\sigma}_i q_{t/2}, Z_j<\hat{\sigma}_j q_{t/2}, Z_k<\hat{\sigma}_k q_{t/2})P^{\ast}(Z_l<\hat{\sigma}_lq_{t/2})$ by observing its Taylor expansion given by \eqref{t-eq-lem6-5}, $P^{\ast}(Z_i< \hat{\sigma}_i q_{t/2}, Z_j<\hat{\sigma}_j q_{t/2}, Z_l<\hat{\sigma}_lq_{t/2})P^{\ast}(Z_k<\hat{\sigma}_k q_{t/2})$ (details are omitted), and $P^{\ast}(Z_i< \hat{\sigma}_i q_{t/2}, Z_j<\hat{\sigma}_j q_{t/2})P^{\ast}(Z_k<\hat{\sigma}_k q_{t/2})P^{\ast}(Z_l<\hat{\sigma}_l q_{t/2})$, since
    \begin{eqnarray*}
    &&P^{\ast}(Z_i <\hat{\sigma}_i q_{t/2}, Z_j < \hat{\sigma}_j q_{t/2}) = \sum_{i_1 =0 }^3 \frac{C^{i}_{t/2, \hat{\sigma}_i}(i_1 )C^{j}_{t/2, \hat{\sigma}_j}(i_1 )E\left\{g_{i_1}^2(X_1)\right\} \sigma_{ij}^{i_1} }{(i_1!)^2} + O\left(|\sigma_{ij}|^4\right). \label{t-eq-lem6-6}
    \end{eqnarray*}
    
    \item[(A3)] If $i_j = 0$ for $j=1,\ldots, 6$, it can be checked that all the terms in $h_{i,j,k,l}(1,1,1,1)$ contain the subitem $C^{k}_{t/2, \hat{\sigma}_k}(0) C^{l}_{t/2, \hat{\sigma}_l}(0)  C^{i}_{t/2, \hat{\sigma}_i}(0)C^{j}_{t/2, \hat{\sigma}_j}(0)$, where seven have the coefficient ``$+$", and the other seven have the coefficient ``$-$", therefore they cancel each other. 
    
\end{itemize}

We observe that all the subitems in the second to the sixteenth items in $h_{i,j,k,l}(1,1,1,1)$ have been considered and therefore cancelled in Cases (A1)--(A3). Only the subitems with the set of $\sigma$'s whose corresponding powers are nonzeros contain all four letters in their subscripts. We consider the following possibilities.
\begin{itemize}
    \item[(B1)] Only two $i_j$'s are nonzero; therefore only two $\sigma$'s appear in the subitem in \eqref{t-eq-lem6-4}. The corresponding $\sigma$ components in it have three possibilities: $\sigma_{ij}^{i_1}\sigma_{kl}^{i_6}$, $\sigma_{ik}^{i_2}\sigma_{jl}^{i_5}$, and $\sigma_{il}^{i_3} \sigma_{jk}^{i_4}$. These three cases can be considered similarly; as an example, we consider that it is $\sigma_{ij}^{i_1} \sigma_{kl}^{i_6}$, with $i_1>0$ and $i_6>0$. Furthermore since $0< i_1+i_6 \leq 3$, we must have ``$i_1 = 1, i_6 = 1$" or ``$i_1 = 1, i_6 = 2$" or ``$i_1 = 2, i_6 = 1$". 
    
    \begin{itemize}
        \item If $i_1 = 1$ and $i_6 = 1$, the subitem in \eqref{t-eq-lem6-4} is give by
        \begin{eqnarray*} C^{i}_{t/2, \hat{\sigma}_i}(1)C^{j}_{t/2, \hat{\sigma}_j}(1)C^{k}_{t/2, \hat{\sigma}_k}(1)C^{l}_{t/2, \hat{\sigma}_l}(1)\sigma_{ij}  \sigma_{kl}, \end{eqnarray*}
        and we can check that the corresponding subitem in $h_{i,j,k,l}(a_1, a_2, a_3, a_4)$ is given by 
        \begin{eqnarray*} &&\left\{C^{i}_{t/2, \hat{\sigma}_i}(1)\right\}^{\frac{1+a_1}{2}}\left\{C^{i}_{1-t/2, \hat{\sigma}_i}(1)\right\}^{\frac{1-a_1}{2}}\left\{C^{j}_{t/2, \hat{\sigma}_j}(1)\right\}^{\frac{1+a_2}{2}}\left\{C^{j}_{1-t/2, \hat{\sigma}_j}(1)\right\}^{\frac{1-a_2}{2}}\\
       \times && \left\{C^{k}_{t/2, \hat{\sigma}_k}(1)\right\}^{\frac{1+a_3}{2}}\left\{C^{k}_{1-t/2, \hat{\sigma}_k}(1)\right\}^{\frac{1-a_3}{2}}\left\{C^{l}_{t/2, \hat{\sigma}_l}(1)\right\}^{\frac{1+a_4}{2}}\left\{C^{l}_{1-t/2, \hat{\sigma}_l}(1)\right\}^{\frac{1-a_4}{2}}\sigma_{ij}  \sigma_{kl}.  \end{eqnarray*}
       Using the fact that $C_{t/2, \hat{\sigma}_a}^{a}(1) = C_{1-t/2, \hat{\sigma}_a}^{a}(1)$ when $\mu_a = 0$ given by \eqref{t-eq-lem3-0}, we can check that when one of $\{\mu_i, \mu_j, \mu_k, \mu_l\}$ is equal to 0, all these subtems will cancel each other across $h_{i,j,k,l}(a_1, a_2, a_3, a_4)$ when evaluating \eqref{t-eq-lem6-1}. Therefore this subitem is remained only when  $\{\mu_i, \mu_j, \mu_k, \mu_l\}$ are all nonzero, and their contribution in \eqref{t-eq-lem6-1} is given by 
       \begin{eqnarray*}
       \left\{C_{t/2, \hat{\sigma}_i}^i(1) -  C_{1-t/2, \hat{\sigma}_i}^i(1)\right\}\left\{C_{t/2, \hat{\sigma}_j}^j(1) -  C_{1-t/2, \hat{\sigma}_j}^j(1)\right\}\\\times \left\{C_{t/2, \hat{\sigma}_k}^k(1) -  C_{1-t/2, \hat{\sigma}_k}^k(1)\right\}\left\{C_{t/2, \hat{\sigma}_l}^l(1) -  C_{1-t/2, \hat{\sigma}_l}^l(1)\right\} \sigma_{ij}\sigma_{kl}. 
       \end{eqnarray*}
       
       \item If $i_1=1$, $i_6 =2$, the subitem in \eqref{t-eq-lem6-4} is given by \begin{eqnarray*} \frac{1}{2}C^{i}_{t/2, \hat{\sigma}_i}(1)C^{j}_{t/2, \hat{\sigma}_j}(1)C^{k}_{t/2, \hat{\sigma}_k}(2)C^{l}_{t/2, \hat{\sigma}_l}(2)\sigma_{ij}  \sigma_{kl}^2, \end{eqnarray*} 
       and the corresponding subitem in $h_{i,j,k,l}(a_1, a_2, a_3, a_4)$ is given by 
        \begin{eqnarray*} &&\left\{C^{i}_{t/2, \hat{\sigma}_i}(1)\right\}^{\frac{1+a_1}{2}}\left\{C^{i}_{1-t/2, \hat{\sigma}_i}(1)\right\}^{\frac{1-a_1}{2}}\left\{C^{j}_{t/2, \hat{\sigma}_j}(1)\right\}^{\frac{1+a_2}{2}}\left\{C^{j}_{1-t/2, \hat{\sigma}_j}(1)\right\}^{\frac{1-a_2}{2}}\\
       \times && \left\{C^{k}_{t/2, \hat{\sigma}_k}(2)\right\}^{\frac{1+a_3}{2}}\left\{C^{k}_{1-t/2, \hat{\sigma}_k}(2)\right\}^{\frac{1-a_3}{2}}\left\{C^{l}_{t/2, \hat{\sigma}_l}(2)\right\}^{\frac{1+a_4}{2}}\left\{C^{l}_{1-t/2, \hat{\sigma}_l}(2)\right\}^{\frac{1-a_4}{2}}\sigma_{ij}  \sigma_{kl}^2.  \end{eqnarray*}
       When $\mu_i = 0$ or $\mu_j = 0$, all these subitems will cancel each other across $h_{i,j,k,l}(a_1, a_2, a_3, a_4)$ when evaluating \eqref{t-eq-lem6-1}. Therefore this subitem is remained only when  $\mu_i \neq 0$ and $\mu_j \neq 0$, and their contribution in \eqref{t-eq-lem6-1} is given by 
       \begin{eqnarray*}
       \frac{1}{2}\left\{C_{t/2, \hat{\sigma}_i}^i(1) -  C_{1-t/2, \hat{\sigma}_i}^i(1)\right\}\left\{C_{t/2, \hat{\sigma}_j}^j(1) -  C_{1-t/2, \hat{\sigma}_j}^j(1)\right\}\\\times \left\{C_{t/2, \hat{\sigma}_k}^k(2) -  C_{1-t/2, \hat{\sigma}_k}^k(2)\right\}\left\{C_{t/2, \hat{\sigma}_l}^l(2) -  C_{1-t/2, \hat{\sigma}_l}^l(2)\right\} \sigma_{ij}\sigma_{kl}^2.
       \end{eqnarray*}
       
       \item If $i_1 = 2$ and $i_6 = 1$, we can similarly conclude that the corresponding subitem in  $h_{i,j,k,l}(a_1, a_2, a_3, a_4)$ can be remained only when $\mu_k \neq 0$ and $\mu_l \neq 0$, and their contribution in \eqref{t-eq-lem6-1} is given by
       \begin{eqnarray*}
       \frac{1}{2}\left\{C_{t/2, \hat{\sigma}_i}^i(2) -  C_{1-t/2, \hat{\sigma}_i}^i(2)\right\}\left\{C_{t/2, \hat{\sigma}_j}^j(2) -  C_{1-t/2, \hat{\sigma}_j}^j(2)\right\}\\\times \left\{C_{t/2, \hat{\sigma}_k}^k(1) -  C_{1-t/2, \hat{\sigma}_k}^k(1)\right\}\left\{C_{t/2, \hat{\sigma}_l}^l(1) -  C_{1-t/2, \hat{\sigma}_l}^l(1)\right\} \sigma_{ij}^2\sigma_{kl}.
       \end{eqnarray*}
       
    \end{itemize}
    
    In summary, for subitems considered in (B1), when summed over $i,j,k,l$ for $i\neq j\neq k \neq l$, they are equal to 
    \begin{eqnarray*}
    3\left[\sum_{i\neq j; i,j\in \mathcal{H}_1}\left\{C_{t/2, \hat{\sigma}_i}^i(1) -  C_{1-t/2, \hat{\sigma}_i}^i(1)\right\}\left\{C_{t/2, \hat{\sigma}_j}^j(1) -  C_{1-t/2, \hat{\sigma}_j}^j(1)\right\}\sigma_{ij}\right]^2 \\ + 3\left[\sum_{i\neq j; i,j\in \mathcal{H}_1}\left\{C_{t/2, \hat{\sigma}_i}^i(1) -  C_{1-t/2, \hat{\sigma}_i}^i(1)\right\}\left\{C_{t/2, \hat{\sigma}_j}^j(1) -  C_{1-t/2, \hat{\sigma}_j}^j(1)\right\}\sigma_{ij}\right]\\
    \times \left[\sum_{i\neq j}\left\{C_{t/2, \hat{\sigma}_i}^i(2) -  C_{1-t/2, \hat{\sigma}_i}^i(2)\right\}\left\{C_{t/2, \hat{\sigma}_j}^j(2) -  C_{1-t/2, \hat{\sigma}_j}^j(2)\right\}\sigma_{ij}^2\right]. 
    \end{eqnarray*}

\item[(B2)] There are three $i_j$'s are nonzero, and therefore each is equal to 1. Note that the corresponding subscripts of $\sigma$ have six letters; and each of $i,j,k,l$ must appear at least once. There are two possibilities
\begin{itemize}
    \item One letter appears three times, but each of the other letters appear once. For example, letter $i$ appears three times, but $j,k,l$ appear once, namely $i_1 = i_2 = i_3 = 1$, which corresponds to the subitem in \eqref{t-eq-lem6-4}:
    \begin{eqnarray*}
    C_{t/2, \hat{\sigma}_i}^i(3) C_{t/2, \hat{\sigma}_j}^j(1) C_{t/2, \hat{\sigma}_k}^k(1) C_{t/2, \hat{\sigma}_l}^l(1)\frac{\sigma_{ij} \sigma_{ik}\sigma_{il}}{3!}.
    \end{eqnarray*}
    Using similar arguments as those in (B1), we can conclude that this subitem is remained in $h_{i,j,k,l}(a_1, a_2, a_3, a_4)$ only when $\mu_i, \mu_j, \mu_k, \mu_l$ are all nonzero, and they are in the order of $O(\sigma_{ij}\sigma_{ik}\sigma_{il})$. 
    
    \item Two letters appear twice, but each of the other two letters appear once. For example, letter $i$ and $j$ appear twice, but $k,l$ appear once, namely $i_1 = i_2 = i_5 = 1$, which corresponds to the subitem in \eqref{t-eq-lem6-4}:
    \begin{eqnarray*}
    C_{t/2, \hat{\sigma}_i}^i(2) C_{t/2, \hat{\sigma}_j}^j(2) C_{t/2, \hat{\sigma}_k}^k(1) C_{t/2, \hat{\sigma}_l}^l(1)\frac{\sigma_{ij} \sigma_{ik}\sigma_{jl}}{2!2!}.
    \end{eqnarray*}
    Using similar arguments as those in (B1), we can conclude that this subitem is remained in $h_{i,j,k,l}(a_1, a_2, a_3, a_4)$ only when $\mu_k \neq 0$ and $\mu_l \neq 0$, and they are in the order of $O(\sigma_{ij} \sigma_{ik}\sigma_{jl})$. 
    
\end{itemize}

In summary, the subitems considered in (B2), when summed over $i,j,k,l$ for $i\neq j\neq k\neq l$, they are in the order of 
\begin{eqnarray*}
O\left( \sum_{i\in \mathcal{H}_1}\left( \sum_{j\neq i, j\in\mathcal{H}_1} |\sigma_{ij}|\right)^3 \right) + O\left( \sum_{i\neq j} |\sigma_{ij}| \sum_{k\neq i, k\in \mathcal{H}_1} |\sigma_{ik}|\sum_{l\neq j, l\in \mathcal{H}_1}|\sigma_{jl}| \right). 
\end{eqnarray*}
    
\end{itemize}

\end{proof}

\begin{lemma}\label{t-lem-5}
Recall the definition: $t_i = \mathrm{1}(|T_i| > |q_{t/2}|) = \mathrm{1}(P_i < t)$; we have
\begin{equation*}
    \begin{split}
        &\sum_{i\neq j\neq k}\Big|E(t_i t_j t_k) - E(t_i t_j)E(t_k) - E(t_i t_k)E(t_j) - E(t_j t_k)E(t_i) + 2E(t_i)E(t_j)E(t_k)\Big|\\
        = & O\left(\sum_{i=1}^p \left(\sum_{j\neq i, j\in \mathcal{H}_1} |\sigma_{ij}|\right)^2 \right) + O\left(\sum_{i\in \mathcal{H}_1}\left( \sum_{j\neq i} |\sigma_{ij}|\right) \left( \sum_{k\neq i, k\in \mathcal{H}_1}|\sigma_{ik}|\right) \right) \\ &+  O\left(\sum_{i\neq j\neq k} |\sigma_{ij}\sigma_{ik} \sigma_{kl}| \right) + O\left (p \sum_{i\neq j} \sigma_{ij}^4 \right). \\
    \end{split}
\end{equation*}
\end{lemma}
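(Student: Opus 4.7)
The plan is to mirror the strategy of Lemma \ref{t-lem-6}, adapted to the three-index analogue of the ``cumulant'' combination. First I would rewrite each summand as a signed sum
\begin{equation*}
E(t_it_jt_k) - E(t_it_j)E(t_k) - E(t_it_k)E(t_j) - E(t_jt_k)E(t_i) + 2E(t_i)E(t_j)E(t_k) = \sum_{a_1,a_2,a_3\in\{1,-1\}} a_1 a_2 a_3 \, E\{h_{i,j,k}(a_1,a_2,a_3)\},
\end{equation*}
in direct analogy with \eqref{t-eq-lem6-1}, where $h_{i,j,k}(a_1,a_2,a_3)$ is the natural three-variable counterpart of $h_{i,j,k,l}$ used in the proof of Lemma \ref{t-lem-6}: the joint probability $P^{\ast}(Z_i<a_1\hat\sigma_iq_{t/2},\,Z_j<a_2\hat\sigma_jq_{t/2},\,Z_k<a_3\hat\sigma_kq_{t/2})$, minus the three pair--single products, plus twice the product of three marginals. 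As in Lemma \ref{t-lem-6}, the parity identity \eqref{t-eq-lem3-0} then makes it sufficient to carry out the expansion explicitly for $a_1=a_2=a_3=1$.

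Next, with the decomposition $Z_i=\mu_i+\sqrt{\sigma_{ij}}X_1+\sqrt{\sigma_{ik}}X_2+\sqrt{1-\sigma_{ij}-\sigma_{ik}}\,Y_i$, and analogous decompositions for $Z_j,Z_k$ using a common $X$-variable for each shared pair, I would Taylor expand every $\Phi(\cdot)$ factor, invoke Lemma \ref{t-lem-3}, and use the orthogonality $E\{g_a(X)g_b(X)\}=0$ for $a\neq b$ to integrate out the shared normals. This yields an expansion of the three-way probability of exactly the flavor of \eqref{t-eq-lem6-4}: a finite sum indexed by nonnegative triples $(i_1,i_2,i_4)$ with $i_1+i_2+i_4\leq 3$ whose generic summand equals
\begin{equation*}
\frac{C^{i}_{t/2,\hat\sigma_i}(i_1+i_2)\,C^{j}_{t/2,\hat\sigma_j}(i_1+i_4)\,C^{k}_{t/2,\hat\sigma_k}(i_2+i_4)\,\sigma_{ij}^{i_1}\sigma_{ik}^{i_2}\sigma_{jk}^{i_4}\,E\{g_{i_1}^2(X_1)\}E\{g_{i_2}^2(X_1)\}E\{g_{i_4}^2(X_1)\}}{(i_1+i_2)!\,(i_1+i_4)!\,(i_2+i_4)!},
\end{equation*}
plus a Lagrange remainder of order $O(\sigma_{ij}^4+\sigma_{ik}^4+\sigma_{jk}^4)$; the three pair--single products and the triple marginal admit parallel expansions along the lines of \eqref{t-eq-lem6-5}.

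The core step is then a bookkeeping argument analogous to (A1)--(A3) and (B1)--(B2) of Lemma \ref{t-lem-6}. I would classify each subitem of the three-way expansion by which of $\sigma_{ij},\sigma_{ik},\sigma_{jk}$ carries a nonzero exponent, equivalently by which subset of $\{i,j,k\}$ its subscripts touch. If at most two of these three letters appear, the subitem is identical to a unique subitem in exactly one of the pair--single expansions (or, for the all-zero case, in each of the four terms), and the coefficients $(+1,-1,-1,-1,+2)$ of the signed combination force exact cancellation. The surviving subitems are those whose active $\sigma$'s cover all three letters, which leaves two shapes: (B1) exactly two $\sigma$'s are active, so one letter is a shared vertex appearing twice and the other two are leaves; and (B2) all three $\sigma$'s are active, which forces $i_1=i_2=i_4=1$.

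For each surviving subitem I would perform the signed sum over $(a_1,a_2,a_3)\in\{\pm1\}^3$ and invoke \eqref{t-eq-lem3-0} --- the factor $C^{a}_{t/2,\hat\sigma}(r)-C^{a}_{1-t/2,\hat\sigma}(r)$ vanishes when $\mu_a=0$ and $r$ is odd --- to read off which $\mu$'s must be nonzero for the subitem to survive. In the principal (B1) sub-case with $i_1=i_2=1$, the two leaf indices are forced into $\mathcal{H}_1$, which after summation over $i\neq j\neq k$ produces the first declared bound $O\bigl(\sum_{i}(\sum_{j\neq i,\,j\in\mathcal{H}_1}|\sigma_{ij}|)^2\bigr)$; the higher-order (B1) sub-cases (with $i_1+i_2=3$) pin the shared vertex into $\mathcal{H}_1$ and leave exactly one leaf free, yielding, after absorbing one factor of $|\sigma|\le 1$, the asymmetric bound $O\bigl(\sum_{i\in\mathcal{H}_1}(\sum_{j\neq i}|\sigma_{ij}|)(\sum_{k\neq i,\,k\in\mathcal{H}_1}|\sigma_{ik}|)\bigr)$; case (B2) contributes $O(\sum_{i\neq j\neq k}|\sigma_{ij}\sigma_{ik}\sigma_{jk}|)$; and summing the Lagrange remainders over the free third index yields $O(p\sum_{i\neq j}\sigma_{ij}^4)$. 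The main obstacle I anticipate is the bookkeeping in the classification step: the three-way signed combination has coefficients $(+1,-1,-1,-1,+2)$ that are delicately asymmetric compared with the four-way one, so exhaustively verifying that every low-support subitem is matched with the correct multiplicity by a unique subitem in a pair--single or triple-marginal expansion --- so that cancellation actually occurs --- is the step most prone to arithmetic slippage. Once this is settled, the parity-based restriction to $\mathcal{H}_1$ indices and the final magnitude estimates follow the template of Lemma \ref{t-lem-6}.
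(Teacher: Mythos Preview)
Your proposal is correct and follows essentially the same approach as the paper, which in fact omits all details and simply states that ``the proof of this lemma is similar to but simpler than that of Lemma \ref{t-lem-6}.'' Your outline reconstructs precisely that argument: the signed-sum representation, the Taylor expansion via Lemma \ref{t-lem-3}, the cancellation bookkeeping in the spirit of (A1)--(A3), and the classification of surviving subitems into the (B1)/(B2) shapes together with the parity restrictions from \eqref{t-eq-lem3-0}; each of the four displayed $O(\cdot)$ bounds is accounted for exactly as you describe.
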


\begin{proof}
The proof of this lemma is similar to but simpler than that of Lemma \ref{t-lem-6}. The details are thus omitted. 
\end{proof}

\begin{lemma}\label{t-lem-7}
Let
\begin{eqnarray*}
m(\bar{V},\bar{R}) = \frac{\bar{V}}{E(\bar{R})} - \frac{E(\bar{V})}{[E(\bar{R})]^2} \bar{R}. 
\end{eqnarray*}
Assume $\limsup_{p\to \infty} p_0t/(p_1\bar \xi) < 1$, and as $p$ is sufficiently large,  
\begin{eqnarray}
&& \sum_{i,j}|\sigma_{ij}| = O\left(p^{2-\delta}\right), \label{t-eq-lem7-assum-0}\\
&&\sum_{i\neq j; i,j\in \mathcal{H}_0} \sigma_{ij}^2 \geq \frac{C_t^{\max}}{E^2(\phi(\hat{\sigma}_i q_{t/2})|\hat{\sigma}_iq_{t/2}|)} \sum_{i\in \mathcal{H}_1, j\in \mathcal{H}_0, \mu_i\in[-\mu_t, \mu_t]} \sigma_{ij}^2, \label{t-eq-lem7-assum-1-added} \\
&&\sum_{i\neq j; i,j\in \mathcal{H}_0} \sigma_{ij}^2 + p \gtrsim \sum_{i\neq j; i, j\in \mathcal{H}_1} \sigma_{ij}^2  \label{t-eq-lem7-assum-1-added-2}\\
&& \sum_{i\neq j} \sigma_{ij}^4 = o\left(\sum_{i\neq j; i,j\in \mathcal{H}_0} \sigma_{ij}^2  + p_0\right),  \label{t-eq-lem7-assum-3}
\end{eqnarray}
where $\mu_t$ and $C_t^{\max}$ are defined in Lemma \ref{t-lem-4}. 
We have
\begin{eqnarray}
E\left[\{R-E(R)\}^4\right] &=& o\left(p^4\mbox{Var}(m(\bar V, \bar R)\right) \label{t-eq-lem7-0-1}\\
E\left[\{V-E(V)\}^4\right] &=& o\left(p^4\mbox{Var}(m(\bar V, \bar R)\right).  \label{t-eq-lem7-0-2}
\end{eqnarray}
\end{lemma}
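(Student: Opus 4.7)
The plan is to expand $E[(R-ER)^4]$ by the multiplicity structure of the four summation indices, control each block using Lemmas \ref{t-lem-4}--\ref{t-lem-6}, and compare the result to a lower bound on $p^4\Var(m(\bar V,\bar R))$ derived from the explicit linear representation of $m$ in the indicators. Since $V$ is structurally the restriction of $R$ to $\mathcal H_0$, the same scheme applies to $E[(V-EV)^4]$ with minor simplifications, so I focus on $R$.

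Writing $R-ER=\sum_i(t_i-\mu_i)$ with $\mu_i=E(t_i)$, I would split
\[
E[(R-ER)^4]=\sum_{i,j,k,l}E\bigl[(t_i-\mu_i)(t_j-\mu_j)(t_k-\mu_k)(t_l-\mu_l)\bigr]
\]
by the multiset structure of $\{i,j,k,l\}$: all four equal (contributing $O(p)$); two distinct with patterns $3+1$ and $2+2$; three distinct ($2+1+1$); and four distinct. Because $t_i\in\{0,1\}$, any power $(t_i-\mu_i)^m$ with $m\geq 1$ is affine in $t_i$, so the degenerate-index pieces collapse to weighted sums of pairwise covariances $\Cov(t_i,t_j)$. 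The two-distinct pieces are then bounded via Lemma \ref{t-lem-4}, the three-distinct piece directly via Lemma \ref{t-lem-5}, and the all-four-distinct piece is exactly the expression controlled by Lemma \ref{t-lem-6}. Using \eqref{t-eq-lem7-assum-0} we have $\sum|\sigma_{ij}|=O(p^{2-\delta})$; using \eqref{t-eq-lem7-assum-1-added-2}, $\sum_{i\neq j;\,i,j\in\mathcal H_1}\sigma_{ij}^2\lesssim p+\sum_{i\neq j;\,i,j\in\mathcal H_0}\sigma_{ij}^2$; and using \eqref{t-eq-lem7-assum-3}, the $p^2\sum\sigma_{ij}^4$ residuals are absorbed into $p^2(p+\sum_{i\neq j;\,i,j\in\mathcal H_0}\sigma_{ij}^2)$.

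For the variance lower bound, I would rewrite $m(\bar V,\bar R)=\sum_iw_i(t_i-\mu_i)$ with $w_i=p_1\bar\xi/(p_0 t+p_1\bar\xi)^2$ for $i\in\mathcal H_0$ and $w_i=-p_0 t/(p_0 t+p_1\bar\xi)^2$ for $i\in\mathcal H_1$. Under $\limsup p_0 t/(p_1\bar\xi)<1$, $p_0 t+p_1\bar\xi\asymp p$ so $|w_i|=\Theta(1/p)$ with opposite signs across the two groups. In the expansion $\Var(m)=\sum_{i,j}w_iw_j\Cov(t_i,t_j)$ the diagonal contributes $\Theta(1/p)$; Lemma \ref{t-lem-4}(P3) makes the $\mathcal H_0\times\mathcal H_0$ off-diagonal strictly positive of order $p^{-2}\sum_{i\neq j;\,\mathcal H_0}\sigma_{ij}^2$; and assumption \eqref{t-eq-lem7-assum-1-added} ensures the $\mathcal H_0\times\mathcal H_1$ cross covariances (bounded by $C_t^{\max}\sigma_{ij}^2$ per Lemma \ref{t-lem-4}(P2), with either sign) cannot cancel the positive null contribution. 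These yield $p^4\Var(m)\gtrsim p^3+p^2\sum_{i\neq j;\,i,j\in\mathcal H_0}\sigma_{ij}^2$.

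Finally, one checks term-by-term that every piece of the upper bound on $E[(R-ER)^4]$ is $o(p^3+p^2\sum_{i\neq j;\,\mathcal H_0}\sigma_{ij}^2)$. The one- through three-distinct contributions are at most $O(p^2+p\sum|\sigma_{ij}|+\sum\sigma_{ij}^4)$, which is $o(\cdot)$ by \eqref{t-eq-lem7-assum-0} and \eqref{t-eq-lem7-assum-3}. The main obstacle is the four-distinct leading term $3E\{[\sum_{i\neq j;\,\mathcal H_1\times\mathcal H_1}(\Delta C^i)(\Delta C^j)\sigma_{ij}]^2\}$ of Lemma \ref{t-lem-6}: a straightforward Cauchy--Schwarz bound using \eqref{t-eq-lem7-assum-1-added-2} gives $O(p^2\sum_{\mathcal H_1}\sigma_{ij}^2)$, which merely \emph{matches} rather than beats the lower bound. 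To extract a strict $o(\cdot)$, one must combine the sharper weak-dependence estimate $(\sum|\sigma_{ij}|)^2=O(p^{4-2\delta})$ from \eqref{t-eq-lem7-assum-0} with the $\Theta(p^3)$ contribution from the diagonal of $\Var(m)$ and, if needed, the $\Theta(p^2\sum_{\mathcal H_0}\sigma_{ij}^2)$ contribution from the positive null covariance structure; this delicate matching of the leading four-distinct order against the dominant term in $p^4\Var(m)$ is the crux of the proof. The same four-step scheme, with all indices restricted to $\mathcal H_0$ so that only the $(P3)$ covariance regime appears, establishes \eqref{t-eq-lem7-0-2}.
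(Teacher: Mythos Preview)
Your overall architecture matches the paper's: the same index-multiplicity decomposition of $E[(R-ER)^4]$, the same use of Lemmas \ref{t-lem-4}--\ref{t-lem-6} for the respective blocks, and essentially the same lower bound $p^4\Var(m)\gtrsim p^2\bigl(p_0+\sum_{i\neq j;\,\mathcal H_0}\sigma_{ij}^2\bigr)$ derived from the linear representation of $m$. The degenerate-index cases and the remainder terms $\mathcal I_3,\mathcal I_4,\mathcal I_5$ from Lemma \ref{t-lem-6} go through as you describe.

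The gap is precisely at the step you flag as the crux. Your proposed resolution---bounding the leading four-distinct term by $(\sum|\sigma_{ij}|)^2=O(p^{4-2\delta})$ and comparing to the diagonal lower bound $p^4\Var(m)\gtrsim p^3$---only yields the required $o(\cdot)$ when $\delta>1/2$, not for all $\delta>0$ as assumed. The paper uses a different mechanism. Writing the two leading terms from Lemma \ref{t-lem-6} as $\mathcal I_1+\mathcal I_2=\mathcal I_{1,1}\cdot(\mathcal I_{1,1}+\mathcal I_{2,2})$, where
\[
\mathcal I_{1,1}=\sum_{i\neq j;\,\mathcal H_1}\{C^i_{t/2}(1)-C^i_{1-t/2}(1)\}\{C^j_{t/2}(1)-C^j_{1-t/2}(1)\}\sigma_{ij},
\]
and $\mathcal I_{2,2}$ is the analogous order-$2$ sum over all $i\neq j$, the paper bounds the first factor by $\mathcal I_{1,1}\lesssim\sum|\sigma_{ij}|=o(p^2)$ from \eqref{t-eq-lem7-assum-0}, and then \emph{identifies} the second factor $E(\mathcal I_{1,1}+\mathcal I_{2,2})$ (via Lemma \ref{t-lem-4}) with a linear combination of $\Var(V)$, $\Cov(V,S)$, $\Var(S)$ plus lower-order pieces controlled by \eqref{t-eq-lem7-assum-1-added-2}--\eqref{t-eq-lem7-assum-3}. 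Since each of these is $O\bigl(p^2\Var(m)\bigr)$ by the variance lower bound (which in the paper retains $\Var(S)$ explicitly for this purpose), the product is $o(p^2)\cdot O(p^2\Var(m))=o(p^4\Var(m))$. The point is that the ``$o$'' comes from weak dependence acting on one factor while the ``$\Var(m)$'' scale comes from recognizing the other factor as essentially the off-diagonal part of $\Var(R)$; your proposal tries to extract both from the same crude estimate, which is not enough.
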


\begin{proof} 
We consider $\mbox{Var}\left(m(\bar{V},\bar{R})\right) $ first. Recall that $R = V + S$; therefore $\bar R = \bar V + \bar S$ with $\bar R = R/p$, $\bar V = V/p$ and $\bar S = S/p$, and 
\begin{eqnarray*}
m(\bar{V},\bar{R}) &=& \frac{E(\bar S)}{\{E(\bar R)\}^2} \bar V - \frac{E(\bar V)}{\{E(\bar R)\}^2} \bar S\\
&=& A \left( \frac{\bar V}{E(\bar V)} - \frac{\bar S}{E(\bar S)} \right),
\end{eqnarray*}
where $A = E(\bar S) E(\bar V)/\{E(\bar R)\}^2$. We shall derive
\begin{eqnarray}
\mbox{Var}\left\{m(\bar{V},\bar{R})\right\} \gtrsim \frac{\sum_{i\neq j; i,j\in \mathcal{H}_0} \sigma_{ij}^2 + p_0  + \mbox{Var}(S)}{p^2}.  \label{t-eq-lem7-1}
\end{eqnarray}
To this end, 
consider 
\begin{eqnarray}
\frac{1}{A^2}\mbox{Var}\left(m(\bar{V},\bar{R})\right) &=& \mbox{Var}(\bar V/E(\bar V)) +  \mbox{Var}(\bar S/E(\bar S)) - 2\mbox{Cov}(\bar V/E(\bar V), \bar S/E(\bar S)), \label{t-eq-lem7-1-added-6}
\end{eqnarray}
and based on Lemma \ref{t-lem-4},
\begin{eqnarray}
&&\mbox{Cov}(\bar V/E(\bar V), \bar S/E(\bar S)) = \frac{1}{E(V) E(S)} \sum_{i\in \mathcal{H}_1, j\in \mathcal{H}_0} \left\{ E(t_it_j) - E(t_i)E(t_j) \right\} \nonumber\\
&\leq& \frac{\phi( C_t^{\max}}{p_0 p_1 t \bar \xi}  \sum_{i\in \mathcal{H}_1, j\in \mathcal{H}_0; \mu_i \in[-\mu_t, \mu_t]} \sigma_{ij}^2 + \frac{1}{p_0 p_1 t \bar \xi} O\left(\sum_{i\in \mathcal{H}_1, j\in \mathcal{H}_0} \sigma_{ij}^4\right).  \label{t-eq-lem7-1-added-7}
\end{eqnarray}
Combining \eqref{t-eq-lem7-1-added-6} with \eqref{t-eq-lem7-1-added-7}, and applying Lemma \ref{t-lem-4}, we have
\begin{eqnarray*}
\frac{1}{A^2}\mbox{Var}\left(m(\bar{V},\bar{R})\right) &\geq& \frac{2E^2(\phi(\hat{\sigma}_i|q_{t/2}|)\hat{\sigma}_iq_{t/2}) \sum_{i\neq j; i,j\in \mathcal{H}_0} \sigma_{ij}^2 + p_0t(1-t) + O\left( \sum_{i\neq j, i,j\in \mathcal{H}_0} \sigma_{ij}^4\right)}{p_0^2 t^2}\\
&&-\frac{p_0 t}{p_1 \bar \xi}\frac{2E^2(\phi(\hat{\sigma}_iq_{t/2})\hat{\sigma}_i|q_{t/2}|)C_t^{\max}}{p_0^2  t^2}  \sum_{i\in \mathcal{H}_1, j\in \mathcal{H}_0; \mu_i \in [-\mu_t, \mu_t]} \sigma_{ij}^2 - \frac{O\left(\sum_{i\in \mathcal{H}_1, j\in \mathcal{H}_0} \sigma_{ij}^4\right)}{p_0p_1t\bar \xi} \\&&+ \mbox{Var}(S/E(S))\\
&\gtrsim& \frac{\sum_{i\neq j; i,j\in \mathcal{H}_0} \sigma_{ij}^2 + p_0}{p_0^2} + \mbox{Var}(S/E(S)),
\end{eqnarray*}
where  ``$\gtrsim$" is based on assumptions \eqref{t-eq-lem7-assum-1-added} and \eqref{t-eq-lem7-assum-3}; this verifies \eqref{t-eq-lem7-1}.

We next consider $E[\{R-E(R)\}^4]$:
\begin{equation}\label{t-eq-lem7-2}
    \begin{split}
         &E[\{R-E(R)\}^4] =  E(R^4) - 4E(R^3)E(R) + 6E(R^2)E^2(R) - 3E^4(R)\\
        = & E\left\{\left(\sum_{i=1}^p t_i\right)^4\right\} - 4E\left\{\left(\sum_{i=1}^p t_i\right)^3\right\}E\left(\sum_{i=1}^p t_i\right) \\ &+ 6E\left\{\left(\sum_{i=1}^p t_i\right)^2\right\}E^2\left(\sum_{i=1}^p t_i\right) - 3E^4\left(\sum_{i=1}^p t_i\right)\\
        = & E\left(\sum_{i, j, k, l = 1}^p t_i t_j t_k t_l\right) - 4E\left(\sum_{i, j, k = 1}^pt_it_jt_k\right)E\left(\sum_{l=1}^pt_l\right) \\ &+ 6E\left(\sum_{i, j = 1}^p t_it_j\right)E\left(\sum_{k=1}^pt_k\right)E\left(\sum_{l=1}^pt_l\right)\\
        & - 3E\left(\sum_{i= 1}^p t_i\right)E\left(\sum_{j= 1}^p t_j\right)E\left(\sum_{k= 1}^p t_k\right)E\left(\sum_{l= 1}^p t_l\right). 
    \end{split}
\end{equation}
We need to consider the following possibilities:
\begin{itemize}
    \item[(C1)] Based on Lemma \ref{t-lem-5}, the collection of terms in (\ref{t-eq-lem7-2}) where $\{i, j, k, l\}$ are all different is given by 
    \begin{equation}
        \begin{split}
            & \sum_{i\neq j\neq k \neq l}\Big\{E(t_i t_j t_k t_l) - E(t_i t_j t_k)E(t_l) - E(t_i t_j t_l)E(t_k) - E(t_i t_k t_l)E(t_j) - E(t_j t_k t_l)E(t_i)\\
        + &E(t_i t_j)E(t_k)E(t_l) + E(t_i t_k)E(t_j)E(t_l) + E(t_i t_l)E(t_j)E(t_k) + E(t_j t_k)E(t_i)E(t_l)\\
        + &E(t_j t_l)E(t_i)E(t_k) + E(t_k t_l)E(t_i)E(t_j) -3E(t_i)E(t_j)E(t_k)E(t_l)\Big\}\\
        = &3 E\Big\{\left[\sum_{i\neq j; i,j\in \mathcal{H}_1}\left\{C_{t/2, \hat{\sigma}_i}^i(1) -  C_{1-t/2, \hat{\sigma}_i}^i(1)\right\}\left\{C_{t/2, \hat{\sigma}_j}^j(1) -  C_{1-t/2, \hat{\sigma}_j}^j(1)\right\}\sigma_{ij}\right]^2\Big\} \\  &+ 3E\Big\{\left[\sum_{i\neq j; i,j\in \mathcal{H}_1}\left\{C_{t/2, \hat{\sigma}_i}^i(1) -  C_{1-t/2, \hat{\sigma}_i}^i(1)\right\}\left\{C_{t/2, \hat{\sigma}_j}^j(1) -  C_{1-t/2, \hat{\sigma}_j}^j(1)\right\}\sigma_{ij}\right]\\
    &\times \left[\sum_{i\neq j}\left\{C_{t/2, \hat{\sigma}_i}^i(2) -  C_{1-t/2, \hat{\sigma}_i}^i(2)\right\}\left\{C_{t/2, \hat{\sigma}_j}^j(2) -  C_{1-t/2, \hat{\sigma}_j}^j(2)\right\}\sigma_{ij}^2\right]\Big\} \\
        & + O\left( \sum_{i\in \mathcal{H}_1}\left( \sum_{j\neq i, j\in\mathcal{H}_1} |\sigma_{ij}|\right)^3 \right) + O\left( \sum_{i\neq j} |\sigma_{ij}| \sum_{k\neq i, k\in \mathcal{H}_1} |\sigma_{ik}|\sum_{l\neq j, l\in \mathcal{H}_1}|\sigma_{jl}| \right)+ O\left( p^2 \sum_{i\neq j}\sigma_{ij}^4\right)\\
        &\equiv 3E(\mathcal{I}_1) + 3E(\mathcal{I}_2) + \mathcal{I}_3 + \mathcal{I}_4 + \mathcal{I}_5. 
        \end{split} \label{t-eq-lem7-3}
    \end{equation}
    We consider the above $\mathcal{I}$ terms one by one.  We consider $\mathcal{I}_1$ and $\mathcal{I}_2$ first. Note that 
    \begin{eqnarray*}
    \mathcal{I}_1 = \mathcal{I}_{1,1} \cdot \mathcal{I}_{1,1} \\
    \mathcal{I}_2 = \mathcal{I}_{1,1} \cdot \mathcal{I}_{2,2},
    \end{eqnarray*}
    with
    \begin{eqnarray*}
    \mathcal{I}_{1,1} &=& \sum_{i\neq j; i,j\in \mathcal{H}_1}\left\{C_{t/2, \hat{\sigma}_i}^i(1) -  C_{1-t/2, \hat{\sigma}_i}^i(1)\right\}\left\{C_{t/2, \hat{\sigma}_j}^j(1) -  C_{1-t/2, \hat{\sigma}_j}^j(1)\right\}\sigma_{ij}\\
    \mathcal{I}_{2,2} &=& \sum_{i\neq j}\left\{C_{t/2, \hat{\sigma}_i}^i(2) -  C_{1-t/2, \hat{\sigma}_i}^i(2)\right\}\left\{C_{t/2, \hat{\sigma}_j}^j(2) -  C_{1-t/2, \hat{\sigma}_j}^j(2)\right\}\sigma_{ij}^2.
    \end{eqnarray*}
    As a consequence
    \begin{eqnarray}
    \mathcal{I}_1 + \mathcal{I}_2 = \mathcal{I}_{1,1}(\mathcal{I}_{1,1} + \mathcal{I}_{2,2}), \label{t-eq-lem7-4-added}
    \end{eqnarray}
    and 
    we need to verify
    \begin{eqnarray}
        E(\mathcal{I}_1 + \mathcal{I}_2)  = o\left(p^4\mbox{Var}(\bar V, \bar R) \right) \label{t-eq-lem7-4}
        \end{eqnarray}
    Based on the assumption \eqref{t-eq-lem7-assum-0}, we immediately have 
    \begin{eqnarray}
    \mathcal{I}_{1,1} = o(p^2), \label{t-eq-lem7-5}
    \end{eqnarray}
    and we have the following decomposition for $\mathcal{I}_{2,2} + \mathcal{I}_{1,1}$:
    \begin{eqnarray}
    E(\mathcal{I}_{2,2} + \mathcal{I}_{1,1}) &=&  \left\{E(\mathcal{I}_{1,1} +\frac{1}{2}\mathcal{I}_{2,2} + \mathcal{I}_{2,3}) + p_0t(1-t) +  \sum_{i=1}^{p_1} \xi_i (1-\xi_i)\right\} \nonumber \\&& + \left\{E(\frac{1}{2}\mathcal{I}_{2,2} -\mathcal{I}_{2,3}) - p_0t(1-t) -  \sum_{i=1}^{p_1} \xi_i (1-\xi_i)\right\}, \label{t-eq-lem7-6}
    \end{eqnarray}
    where 
    \begin{eqnarray*}
    \mathcal{I}_{2,3} = \frac{E\{g_3^2(X_1)\}}{(3!)^2}\sum_{i \neq j} \left\{C_{t/2, \sigma_i}^i(3) -  C_{1-t/2, \sigma_i}^i(3)\right\}\left\{C_{t/2, \sigma_j}^j(3) -  C_{1-t/2, \sigma_j}^j(3)\right\}\sigma_{ij}^3.  \label{t-eq-lem7-7}
    \end{eqnarray*}
    Furthermore, with Lemmas \ref{t-lem-3} and \ref{t-lem-4}, we are able to check 
    \begin{eqnarray}
    \mbox{Var}(R) &=& \sum_{i\neq j} \{E(t_it_j) - E(t_i)E(t_j)\} + \sum_{i=1}^p \left[E(t_i^2) - \{E(t_i)\}^2\right] \nonumber \\
    &=& E(\mathcal{I}_{1,1} +\frac{1}{2}\mathcal{I}_{2,2} + \mathcal{I}_{2,3}) + p_0t(1-t) +  \sum_{i=1}^{p_1} \xi_i (1-\xi_i) + O\left(\sum_{i\neq j} \sigma_{ij}^4 \right), \label{t-eq-lem7-8}
    \end{eqnarray}
    and
    \begin{eqnarray}
    &&\mbox{Var}(V) + 2 \mbox{Cov}(V, S) \nonumber \\&=& \sum_{i\neq j; i,j\in \mathcal{H}_0} \{E(t_it_j) - E(t_i)E(t_j)\} \nonumber \\&& + \sum_{i\in \mathcal{H}_1, j\in \mathcal{H}_0} \{E(t_it_j) - E(t_i)E(t_j)\} + \sum_{i=1}^{p_0} \left[E(t_i^2) - \{E(t_i)\}^2\right]\nonumber \\
    &=& E(\frac{1}{2}\mathcal{I}_{2,2}) + p_0t(1-t) \nonumber \\ && - E(\frac{1}{2}\sum_{i\neq j; i,j\in \mathcal{H}_1} \left\{C_{t/2, \sigma_i}^i(2) -  C_{1-t/2, \sigma_i}^i(2)\right\}\left\{C_{t/2, \sigma_j}^j(2) -  C_{1-t/2, \sigma_j}^j(2)\right\}\sigma_{ij}^2) \nonumber \\
    && + O\left(\sum_{i\neq j}\sigma_{ij}^4 \right)\label{t-eq-lem7-9}
    \end{eqnarray}
    Combining \eqref{t-eq-lem7-6}, \eqref{t-eq-lem7-8}, and  \eqref{t-eq-lem7-9} leads to 
    \begin{eqnarray}
    E(\mathcal{I}_{2,2} + \mathcal{I}_{1,1}) &=& \mbox{Var}(R) + \mbox{Var}(V) + 2 \mbox{Cov}(V,S) - 2p_0t(1-t) - \sum_{i=1}^{p_1} \xi_i (1-\xi_i)\nonumber\\&&
    - E(I_{2,3}) + E(\frac{1}{2}\sum_{i\neq j; i,j\in \mathcal{H}_1} \left\{C_{t/2, \sigma_i}^i(2) -  C_{1-t/2, \sigma_i}^i(2)\right\}\left\{C_{t/2, \sigma_j}^j(2) -  C_{1-t/2, \sigma_j}^j(2)\right\}\sigma_{ij}^2)\nonumber\\
    && + O\left(\sum_{i\neq j}\sigma_{ij}^4 \right)\nonumber\\
    &=& 2\mbox{Var}(V) + 4\mbox{Cov}(V, S) +  \mbox{Var}(S) - 2p_0t(1-t) - \sum_{i=1}^{p_1} \xi_i (1-\xi_i)\nonumber\\&&
    - I_{2,3} + E(\frac{1}{2}\sum_{i\neq j; i,j\in \mathcal{H}_1} \left\{C_{t/2, \sigma_i}^i(2) -  C_{1-t/2, \sigma_i}^i(2)\right\}\left\{C_{t/2, \sigma_j}^j(2) -  C_{1-t/2, \sigma_j}^j(2)\right\}\sigma_{ij}^2)\nonumber \\
    && + O\left(\sum_{i\neq j}\sigma_{ij}^4 \right). \label{t-eq-lem7-10}
    \end{eqnarray}
    Based on \eqref{t-eq-lem7-1} and Cauchy-Schwartz inequality, we conclude 
    \begin{equation}
    2\mbox{Var}(V) + 4\mbox{Cov}(V, S) +  \mbox{Var}(S) - 2p_0t(1-t) - \sum_{i=1}^{p_1} \xi_i (1-\xi_i) = O\left(p^2\mbox{Var}(m(\bar V, \bar R)\right); \label{t-eq-lem7-11}
    \end{equation}
    and based on the assumptions \eqref{t-eq-lem7-assum-1-added-2} and \eqref{t-eq-lem7-assum-3}, we have
    \begin{equation}
    E(\frac{1}{2} \sum_{i\neq j; i,j\in \mathcal{H}_1} \left\{C_{t/2, \sigma_i}^i(2) -  C_{1-t/2, \sigma_i}^i(2)\right\}\left\{C_{t/2, \sigma_j}^j(2) -  C_{1-t/2, \sigma_j}^j(2)\right\})\sigma_{ij}^2
    + O\left(\sum_{i\neq j}\sigma_{ij}^4 \right) = O\left(p^2\mbox{Var}(m(\bar V, \bar R)\right) \label{t-eq-lem7-12}
    \end{equation}
    Combining \eqref{t-eq-lem7-10} \eqref{t-eq-lem7-11} and \eqref{t-eq-lem7-12}, we have
    \begin{eqnarray*}
     E(\mathcal{I}_{2,2} + \mathcal{I}_{1,1}) &=& E(-\mathcal{I}_{2,3}) + O\left(p^2\mbox{Var}(m(\bar V, \bar R)\right),
    \end{eqnarray*}
    which together with \eqref{t-eq-lem7-5} leads to 
    \begin{eqnarray}
    E(\mathcal{I}_{1,1}(\mathcal{I}_{2,2} + \mathcal{I}_{1,1})) = -o(p^2E(\mathcal{I}_{2,3})) + o\left(p^4\mbox{Var}(m(\bar V, \bar R)\right). \label{t-eq-lem7-13}
    \end{eqnarray}
    For $\mathcal{I}_{1,1}\mathcal{I}_{2,3}$:
    \begin{eqnarray*}
    |\mathcal{I}_{1,1}\mathcal{I}_{2,3}| \lesssim o(p^2\sum_{i\neq j}|\sigma_{i,j}|^3) = o\left(p^4\mbox{Var}(m(\bar V, \bar R)\right),
    \end{eqnarray*}
    which together with \eqref{t-eq-lem7-13} and \eqref{t-eq-lem7-4-added} verifies \eqref{t-eq-lem7-4}. 

  We proceed to consider $\mathcal{I}_3$. 
  \begin{eqnarray}
  \sum_{i\neq j\neq k\neq l; i,j,k,l\in \mathcal{H}_1}|\sigma_{ij}||\sigma_{ik}||\sigma_{il}|&\leq& \sum_{k\ne l, k,l\in \mathcal{H}_1}\left(\sum_{i\neq j, i,j\in \mathcal{H}_1} \sigma_{ij}^2\right)^{1/2}\left(\sum_{i\neq j\neq k\neq l; i,j\in \mathcal{H}_1}\sigma_{ik}^2 \sigma_{il}^2\right)^{1/2}\nonumber\\
  &=& \left(\sum_{i\neq j; i,j\in \mathcal{H}_1} \sigma_{ij}^2\right)^{1/2} \sum_{k\neq l; k,l\in \mathcal{H}_1}\left\{(p_1-3) \sum_{i\neq k\neq l; i\in \mathcal{H}_1}(\sigma_{ik}^2 \sigma_{il}^2)\right\}^{1/2}\nonumber \\
  &\leq& p_1(p_1-3)^{1/2}\left(\sum_{i\neq j; i,j\in \mathcal{H}_1} \sigma_{ij}^2\right)^{1/2} \left\{ \sum_{k\neq l\neq i; i,k,l\in \mathcal{H}_1} (\sigma_{ik}^2 \sigma_{il}^2) \right\}^{1/2}\nonumber \\
  &\lesssim & p_1^{3/2}\left(\sum_{i\neq j; i,j\in \mathcal{H}_1} \sigma_{ij}^2\right)^{1/2} \left( p_1\sum_{i\neq k; i,k\in \mathcal{H}_1} \sigma_{ik}^4 + p\sum_{i\neq l; i,l\in \mathcal{H}_1} \sigma_{il}^4 \right)^{1/2}\nonumber\\
  &=& p_1^2 \left(\sum_{i\neq j; i,j\in \mathcal{H}_1} \sigma_{ij}^2\right)^{1/2} \left(2\sum_{i\neq j; i,j\in \mathcal{H}_1}\sigma_{ij}^4\right)^{1/2}, \label{t-eq-lem7-14-added-1}
  \end{eqnarray}
  where the first ``$\leq$" is because of the Cauchy–Schwarz inequality, the second ``$\leq$" is derived from the Jensen's inequality by noting that $\sqrt{x}$ is a concave function, the ``$\lesssim$" is based on $\sigma_{ik}^2\sigma_{il}^2 \lesssim \sigma_{ik}^4 + \sigma_{il}^4$. The far right hand side of \eqref{t-eq-lem7-14-added-1}
  is in the order of $o\left(p^4\mbox{Var}(m(\bar V, \bar R)\right)$ based on \eqref{t-eq-lem7-1} and the assumptions \eqref{t-eq-lem7-assum-1-added-2} and \eqref{t-eq-lem7-assum-3}; this implies  
  \begin{eqnarray}
  \mathcal{I}_3 = O\left( \sum_{i\in \mathcal{H}_1}\left( \sum_{j\neq i, j\in\mathcal{H}_1} |\sigma_{ij}|\right)^3 \right) = o\left(p^4\mbox{Var}(m(\bar V, \bar R)\right). \label{t-eq-lem7-14}
  \end{eqnarray}
 Similarly to the development of \eqref{t-eq-lem7-14-added-1}, we can obtain
  \begin{eqnarray}
  \mathcal{I}_4 = O\left( \sum_{i\neq j} |\sigma_{ij}| \sum_{k\neq i, k\in \mathcal{H}_1} |\sigma_{ik}|\sum_{l\neq j, l\in \mathcal{H}_1}|\sigma_{jl}| \right) = o\left(p^4\mbox{Var}(m(\bar V, \bar R)\right).  \label{t-eq-lem7-15}
  \end{eqnarray}
  Last, 
  \begin{eqnarray}
  \mathcal{I}_5 = O\left( p^2 \sum_{i\neq j}\sigma_{ij}^4\right)  =  o\left(p^4\mbox{Var}(m(\bar V, \bar R)\right) \label{t-eq-lem7-16}
  \end{eqnarray}
  is implied by \eqref{t-eq-lem7-1} and the assumption \eqref{t-eq-lem7-assum-3}. 
  
  Combining \eqref{t-eq-lem7-4}, \eqref{t-eq-lem7-14}, \eqref{t-eq-lem7-15}, and \eqref{t-eq-lem7-16},  we conclude that the collection of terms in \eqref{t-eq-lem7-2} where $\{i,j,k,l\}$ are all different is in the order of $o\left(p^4\mbox{Var}(m(\bar V, \bar R)\right)$. 
  
    \item[(C2)] If one and only one pair in $\{i, j, k, l\}$ are equal to each other, as an example, we consider $i\neq j \neq k=l$, the collection of terms in (\ref{t-eq-lem7-2}) is given by
    \begin{equation}
        \begin{split}
            & \sum_{i\neq j\neq k}\Big\{E(t_it_j t_k) - E(t_it_k)E(t_j) - E(t_jt_k)E(t_i) - 2E(t_i t_j t_k)E(t_k)
        + E(t_i)E(t_j)E(t_k) \\& + 2E(t_i t_k)E(t_j)E(t_k) + 2E(t_j t_k)E(t_i)E(t_k) + E(t_i t_j)E^2(t_k) -3E^2(t_k)E(t_i)E(t_j)\Big\}\\
        = & \sum_{i\neq j \neq k} \Big[\big\{E(t_i t_j t_k) - E(t_i t_k)E(t_j) - E(t_j t_k)E(t_i) - E(t_i t_j)E(t_k) \\ & \hspace{0.6in} + 2E(t_i)E(t_j)E(t_k)\big\}\{1 - 2E(t_k)\}\Big] \\ &+ \sum_{i\neq j \neq k}\big\{E(t_k) - E^2(t_k)\big\}\big\{E(t_i t_j) - E(t_i)E(t_j)\big\}\\
        =& O\left(\sum_{i=1}^p \left(\sum_{j\neq i, j\in \mathcal{H}_1} |\sigma_{ij}|\right)^2 \right) + O\left(\sum_{i\in \mathcal{H}_1}\left( \sum_{j\neq i} |\sigma_{ij}|\right) \left( \sum_{k\neq i, k\in \mathcal{H}_1}|\sigma_{ik}|\right) \right) \\ &+  O\left(\sum_{i\neq j\neq k} |\sigma_{ij}\sigma_{ik} \sigma_{kl}| \right) + O\left (p \sum_{i\neq j} \sigma_{ij}^4 \right) + O\left(p \sum_{i\neq j}\sigma_{ij}^2\right) + O\left(p \sum_{i\neq j; i,j \in \mathcal{H}_1} |\sigma_{ij}|\right)\\
        =& \mathcal{I}_6 + \mathcal{I}_7 + \mathcal{I}_8 + \mathcal{I}_9 + \mathcal{I}_{10} + \mathcal{I}_{11},
        \end{split} \label{t-eq-lem7-17}
    \end{equation}
    where to achieve the second ``$=$", we have applied Lemmas \ref{t-lem-4} and \ref{t-lem-5}. Next, we consider these ``$\mathcal{I}$" terms one by one. 
    For $\mathcal{I}_6$, since $|\sigma_{ij}|<1$, therefore by noting \eqref{t-eq-lem7-1} and the assumption \eqref{t-eq-lem7-assum-0}, we have
    \begin{eqnarray}
    \mathcal{I}_6 = O\left(\sum_{i=1}^p \left(\sum_{j\neq i, j\in \mathcal{H}_1} |\sigma_{ij}|\right)^2 \right)\lesssim p_1 \sum_{j\neq i, j\in \mathcal{H}_1}|\sigma_{ij}| = p_1O(p^{2-\delta})= o\left(p^4\mbox{Var}(m(\bar V, \bar R)\right);
    \end{eqnarray}
    and similarly
    \begin{eqnarray}
    \mathcal{I}_7 = O\left(\sum_{i\in \mathcal{H}_1}\left( \sum_{j\neq i} |\sigma_{ij}|\right) \left( \sum_{k\neq i, k\in \mathcal{H}_1}|\sigma_{ik}|\right) \right) \lesssim p \sum_{i\neq k; i,k \in \mathcal{H}_1} |\sigma_{ik}| = o\left(p^4\mbox{Var}(m(\bar V, \bar R)\right). 
    \end{eqnarray}
    For $\mathcal{I}_8$, since $|\sigma_{ij}|<1$, we have
    \begin{eqnarray}
    \mathcal{I}_8 = O\left(\sum_{i\neq j\neq k} |\sigma_{ij}\sigma_{ik} \sigma_{kl}| \right) \lesssim p \sum_{i\neq j} |\sigma_{ij}| = o\left(p^4\mbox{Var}(m(\bar V, \bar R)\right).
    \end{eqnarray}
    For $\mathcal{I}_9$ and $\mathcal{I}_{10}$, clearly
    \begin{eqnarray}
    \mathcal{I}_9 \lesssim \mathcal{I}_{10} = o\left(p^4\mbox{Var}(m(\bar V, \bar R))\right);
    \end{eqnarray}
    and 
    \begin{eqnarray}
    \mathcal{I}_{11} = O\left(p \sum_{i\neq j; i,j \in \mathcal{H}_1} |\sigma_{ij}|\right) = o\left(p^4\mbox{Var}(m(\bar V, \bar R))\right) \label{t-eq-lem7-18}
    \end{eqnarray}
    is obtained based on the assumption \eqref{t-eq-lem7-assum-0}. 
    
    Combining \eqref{t-eq-lem7-17}--\eqref{t-eq-lem7-18}, we conclude that the collection of terms in \eqref{t-eq-lem7-2} where one and only one pair in  $\{i,j,k,l\}$ are equal to each other is in the order of $o\left(p^4\mbox{Var}(m(\bar V, \bar R)\right)$.
    
    \item[(C3)] The collection of terms in \eqref{t-eq-lem7-2} where two pairs in 
    $\{i, j, k, l\}$ are equal to each other carries only $O(p^2)$ terms; then based on \eqref{t-eq-lem7-1}, we conclude that this collection of terms is in the order of $o\left(p^4\mbox{Var}(m(\bar V, \bar R)\right)$. 
    
    \item[(C4)] The collection of terms in \eqref{t-eq-lem7-2} where at least three of $\{i, j, k, l\}$ are mutually equal to each other also carries $O(p^2)$ terms, therefore this collection of terms is  in the order of $o\left(p^4\mbox{Var}(m(\bar V, \bar R)\right)$.  

\end{itemize}

In summary, the discussion of (C1)--(C4) implies 
\begin{eqnarray*}
E[\{R-E(R)\}^4] = o\left(p^4\mbox{Var}(m(\bar V, \bar R))\right),
\end{eqnarray*}
which complete the proof for \eqref{t-eq-lem7-0-1}. 

The proof for \eqref{t-eq-lem7-0-2} is similar but simpler. In particular, similar developments in Lemmas \ref{t-lem-6} and \ref{t-lem-7} can be applied to derive the rates of convergence for the terms in $E(V-E(V))^4$.  
But we observe that
since $V$ is the summation of $t_i$ over $i\in \mathcal{H}_0$, the corresponding terms involving $\mathcal{H}_1$ in both Lemmas will not appear, which  simplifies the developments. Here we skip the details for this development to avoid lengthy presentations.  
\end{proof}

\section{Proof of Theorem \ref{Thm-4}}\label{sec-proof-Thm-4}

According to \ref{t-lem-4}, 
\begin{eqnarray*}
\Var(p^{-1}\sum_{i=1}^p t_i) = \dfrac{O(\sum_{i, j = 1}^p \sigma_{ij})}{p^2} = O(p^{-\delta}).
\end{eqnarray*}
Based on Strong Law of Large Numbers for Weakly Correlated Variables (\citet{lyons1988strong}), we have
\begin{eqnarray*}
p^{-1}\sum_{i = 1}^p\{t_i - \xi_i\}\convergeP0\text{ a.s.}
\end{eqnarray*}
Similarly, we can show
\begin{eqnarray*}
p_0^{-1}\sum_{i \in \mathcal{H}_0}\{t_i - \xi_i\}\convergeP0\text{ a.s.}
\end{eqnarray*}
Therefore, based on proof of Theorem 1 in \citet{fan2012estimating}, 
\begin{eqnarray*}
\lim_{p\to\infty} \left[\mathrm{FDP}(t) -\frac{\sum_{i \in \mathcal{H}_0}\xi_i}{\sum_{i=1}^{p}\xi_i}\right] = 0.
\end{eqnarray*}

\section{Proof of Theorem \ref{Thm-5} and Corollary \ref{Corrollary-asym-var-unknown}} \label{sec-proof-Thm-5}

Define $\bar{V} = V(t)/p$ and $\bar{R} = R(t)/p$, then $\FDP(t) = \bar{V}/\bar{R}$. Let $H(v,r)= v/r$ be a function of $(v,r)$ and apply Taylor expansion on this function at $(E(\bar V), E(\bar R))$ and plugging in $v = \bar V$, $r = \bar R \vee c$ to the expansion, where $\bar R\vee c = \max\{\bar R, c\}$ and $c$ is a sufficiently small constant with $0<c<0.5E(\bar R)$; we have
\begin{eqnarray*}
\frac{\bar V}{\bar R\vee c} = \frac{E(\bar{V})}{E(\bar{R})} + \frac{\bar{V} - E(\bar{V})}{E(\bar{R})} - \frac{E(\bar{V})}{\{E(\bar{R})\}^2}\Big\{\bar{R}\vee c - E(\bar{R})\Big\} + r^*(\bar{V}, \bar{R}),  \label{t-eq-thm1-1}
\end{eqnarray*}
where $r^*(\bar V, \bar R)$ is the remainder term in the Taylor expansion. Since $E(\bar R) > c$ and $\bar R\vee c\geq c$, and in an arbitrary order of the partial derivatives of the function $H(v,r)$, only $r$ can appear in the denominator, therefore, we can verify 
\begin{eqnarray}
|r^*(\bar V, \bar R)| \lesssim \left\{\bar V - E(\bar V)\right\}^2 + \left\{\bar R \vee c - E(\bar R)\right\}^2. \label{t-eq-thm1-2}
\end{eqnarray}
Therefore, we can have
\begin{eqnarray}
\mbox{FDP}(t) = E(\bar V)/E(\bar R) +  m(\bar{V},\bar{R}) + \mathcal{J} +r^*(\bar V, \bar R), \label{t-eq-thm1-3}
\end{eqnarray}
where
\begin{eqnarray*}
m(\bar{V},\bar{R}) &=&  \frac{\bar{V}}{E(\bar{R})} - \frac{E(\bar{V})}{\{E(\bar{R})\}^2} \bar{R} \label{t-eq-thm1-4}\\
\mathcal{J} &=& \frac{\bar V}{\bar R} - \frac{\bar V}{\bar R \vee c}.  \label{t-eq-thm1-5}
\end{eqnarray*}
Define $r(\bar V, \bar R) = r^*(\bar V, \bar R) + \mathcal{J}$. To complete the proof of the theorem, it suffices to show that both $E(\mathcal{J}^2)$ and $E\{r^{*2}(\bar V, \bar R)\}$ can be dominated by $\mbox{Var}\{m(\bar{V},\bar{R})\}$.  We consider $E\{r^{*2}(\bar V, \bar R)\}$ first. Based on \eqref{t-eq-thm1-1}, we have
\begin{eqnarray*}
|r^*(\bar V, \bar R)| &\lesssim& \left\{\bar V - E(\bar V)\right\}^2 + \left\{\bar R \vee c - \bar R\right\}^2 + \left\{ \bar R -  E(\bar R)\right\}^2\\
&=&\left\{\bar V - E(\bar V)\right\}^2 + \left\{ c - \bar R\right\}^2\mathrm{1}(\bar R < c)  + \left\{ \bar R -  E(\bar R)\right\}^2\\
&\leq & \left\{\bar V - E(\bar V)\right\}^2 + c^2\mathrm{1}(\bar R < c)  + \left\{ \bar R -  E(\bar R)\right\}^2, \label{t-eq-thm1-6}
\end{eqnarray*}
which leads to 
\begin{eqnarray}
 E\left\{r^{*2}(\bar V, \bar R)\right\}
&\lesssim& E\left\{\bar V - E(\bar V)\right\}^4 + c^4P(\bar R < c)  + E\left\{ \bar R -  E(\bar R)\right\}^4 \nonumber \\
&\leq & E\left\{\bar V - E(\bar V)\right\}^4   + 2  E\left\{ \bar R -  E(\bar R)\right\}^4, \label{t-eq-thm1-7}
\end{eqnarray}
since based on the definition of $c\in (0, 0.5 E(\bar R))$, we have
\begin{eqnarray*}
P(\bar R<c) \leq P(|\bar R - E(\bar R)|> c) \leq \frac{E\left\{\bar R - E(\bar R)\right\}^4}{c^4}. \label{t-eq-thm1-8}
\end{eqnarray*}
Combining \eqref{t-eq-thm1-7} with Lemma \ref{t-lem-7}, we conclude
\begin{eqnarray}
E\{r^{*2}(\bar V, \bar R)\} = o\left(\mbox{Var}\{m(\bar{V},\bar{R})\}\right). \label{t-eq-thm1-9}
\end{eqnarray}

Next, we consider $E(\mathcal{J}^2)$; in particular
\begin{eqnarray*}
\mathcal{J} = \frac{\bar V}{\bar R} - \frac{\bar V}{\bar R \vee c} = \frac{\bar V}{\bar R}\cdot \frac{\bar R\vee c - \bar R}{\bar R \vee c} = \frac{\bar V}{\bar R} \cdot \frac{c-\bar R}{c}\cdot \mathrm{1}(\bar R<c).
\end{eqnarray*}
By noting $\bar V/\bar R\in (0,1)$ and $(c-\bar R)/c \in (0,1)$ when $\bar R<c$, we have
\begin{eqnarray}
E(\mathcal{J}^2) \leq E\{\mathrm{1}(\bar R<c)\} \leq \frac{E\left\{\bar R - E(\bar R)\right\}^4}{c^4} = o\left(\mbox{Var}\{m(\bar{V},\bar{R}))\right). \label{t-eq-thm1-10}
\end{eqnarray}
Combining \eqref{t-eq-thm1-9}, \eqref{t-eq-thm1-10} with \eqref{t-eq-thm1-3}, we complete the proof of Theorem \ref{Thm-5}.

We proceed to show Corollary \ref{Corrollary-asym-var-unknown}. From (\ref{t-eq-thm1-3}), we have that
\begin{eqnarray*}
\mbox{Var} \left\{ \mbox{FDP}(t) \right\} &=& \mbox{Var} \left\{ m(\bar{V},\bar{R}) + \mathcal{J} + r^*(\bar V, \bar R) \right\} \\
&=& \mbox{Var} \left\{ m(\bar{V},\bar{R}) \right\} + \mbox{Var} \left\{ \mathcal{J} \right\} + \mbox{Var} \left\{r^*(\bar V, \bar R) \right\}\\
&& + 2 \mbox{Cov} \left\{ m(\bar{V},\bar{R}), \mathcal{J} \right\} + 2 \mbox{Cov} \left\{ m(\bar{V},\bar{R}), r^*(\bar V, \bar R) \right\}+ 2 \mbox{Cov} \left\{ \mathcal{J}, r^*(\bar V, \bar R) \right\},
\end{eqnarray*}
which together with \eqref{t-eq-thm1-9}, \eqref{t-eq-thm1-10}, and Cauchy-Schwarz inequality immediately leads to
\begin{equation*}
\lim_{p \to \infty} \frac{\mbox{Var} \left\{ \mbox{FDP}(t) \right\}}{\mbox{Var} \left\{ m(\bar{V},\bar{R}) \right\}} = 1.
\end{equation*}
With straightforward evaluation, we can check that the denominator above is identical to $V_1(t) + V_2(t)$. We complete the proof of Corollary \ref{Corrollary-asym-var-unknown}. 

\bibliographystyle{asa}
\bibliography{FDP_t}

\end{document}